\newcommand*{\rom}[1]{\expandafter\@slowromancap\romannumeral #1@}
\theoremstyle{definition}
\newtheorem*{theorem*}{Theorem}
\newtheorem{theorem}{Theorem}[section]
\newtheorem*{Remark}{Remark}
\newtheorem{claim}[theorem]{Claim}
\newtheorem{lemma}[theorem]{Lemma}
\newtheorem{ex}[theorem]{Example}
\newtheorem{definition}[theorem]{Definition}
\newtheorem*{theorem-non}{Theorem}
\definecolor{applegreen}{rgb}{0.0, 0.42, 0.24}
\definecolor{applegreen}{rgb}{0.55, 0.71, 0.0}
\newcommand{\Z}{\mathbb{Z}}
\newcommand{\R}{\mathbb{R}}
\newcommand{\D}{\Delta}
\renewcommand{\t}{\tau}
\begin{document}

\begin{center}
		{\Large \sc
                B-facets in dimension 4
                }
	\vspace{3ex}
 
		{\sc{Fedor Selyanin}}

\end{center}

\begin{abstract}
We complete the classification of B-facets of a 4-dimensional Newton polyhedron, filling a gap in the classification of \cite{ELT22}, found by the authors of \cite{LPS22}.
\end{abstract}

\section{Introduction}

The local monodromy conjecture relates an arithmetic invariant of a singularity of a polynomial $f$ (namely, the Igusa, motivic or topological zeta-function) to a geometric invariant (the characteristic function $P_f$ of the Milnor monodromy, or Bernstein-Sato polynomial). The first version was posed in the \cite{I88}, but only recently there has been significant progress for singularities that are non-degenerate with respect to their Newton polyhedra: see \cite{ELT22} for arbitrary 4-dimensional polyhedra and \cite{LPS22} for simplicial polyhedra of arbitrary dimension.

A primitive degree $b$ root of unity has a chance to be a root or a pole of $P_f$, if some facet $\tau$ of the Newton polyhedron is cut by a support plane of the form $\sum_i a_ix_i=b,\, gcd(a)=1$. Checking whether it is really a root or a pole (i.e. whether the facet $\tau$ actually {\it contributes} roots to $P_f$) is an important part of the question.

One idea in \cite{ELT22} (expanding on the previous work in dimensions 2 and 3 \cite{L88}, \cite{LV11}) is to show that (i) a facet $\tau$ contributes a root to $P_f$ if it is not a $B$-facet (Definition \ref{B_facet_def} below), and (ii) a pair of adjacent facets $\phi$ and $\tau$ contributes a root to $P_f$ if they form a so-called border (Definition 5.1 in \cite{ELT22}). Then the proof of the conjecture in 4 dimensions requires to classify $B$-facets of 4-dimensional polyhedra. This was done in Lemma 5.18 of \cite{ELT22} claiming that all such facets are of two types (so-called $B_1$- and $B_2$-facets, see Theorem \ref{main_clas_th} below).

However, later the authors of \cite{LPS22} found examples contradicting the claimed classification \cite{LPS23}. Then the first author of \cite{ELT22} conjectured that every missing facet $\delta$ in this classification can be split into two pieces $\phi$ and $\tau$ satisfying the above mentioned definition of the border (except that they are two pieces of the same facet rather than two different facets) and thus do not require any further changes in the proof of the main result of \cite{ELT22} (see an upcoming erratum \cite{ELTe} for details).

The aim of this note is to prove this conjectural corrected classification.

\section{Classification theorem of B-facets in dimension $4$}

In our notations a polytope is a finite set, its dimension is the dimension of its convex hull, its face is the intersection of a face of its convex hull with the set, its facet is a face whose affine span has codimension $1$.

\begin{definition}\label{B_simplex_def}
    A $\mathbf{B}$\textbf{-simplex} in $\Z^n_{\ge 0}$ is a pyramid of height $1$ with base on a coordinate hyperplane. A zero-dimensional $B$-simplex is a point one of whose coordinates equals $1$. In this paper we use $n-1$ and $n$-dimensional B-simplices.
\end{definition}

\begin{definition}\label{B_facet_def}
    A bounded facet $\tau \subset \Z^n_{\ge 0}$, $dim(\tau) = n-1$ with positive normal covector is called a $\mathbf{B}$\textbf{-facet} if every $(n-1)$-dimensional simplex with vertices in it is $B$-simplex.
\end{definition}

In the whole text we assume that $\t$ is a $B$-facet. The following theorem is the main result of this paper.

\begin{theorem}\label{main_clas_th}
    Every B-facet $\tau$ is one of the following types:
\begin{enumerate}
    \item \textbf{$\mathbf{B_1}$-facet} i.e. a pyramid of height 1 with base on a coordinate hyperplane.
    \item \textbf{$\mathbf{B_2}$-facet} i.e. its projection on some coordinate 2-plane is the triangle $(0,0), (1,0), (0,1)$.
    \item \textbf{Flat border} i.e. it contains a triangle $ABC$ of the form 
    $$(0,0,\star, \star), (0,0,\star, \star), (1,1,\star, \star)$$
    and all the other points are contained in $\{x_1 = 0\} \cup \{x_2 = 0\}$.\\
    \item \textbf{Standard cross-polytope} i.e. the set 
    $$(1,1,0,0), (1,0,1,0), (1,0,0,1), (0,1,1,0), (0,1,0,1), (0,0,1,1)$$
\end{enumerate}

\end{theorem}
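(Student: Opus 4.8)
The plan is to fix one coordinate hyperplane that cuts a $2$-dimensional face of $\tau$ and to describe $\tau$ by the way it sits above that face. Two preliminary observations set this up. First, if $\tau$ has only four vertices it is a tetrahedron, hence (as a polytope) a simplex with vertices in itself, hence a $B$-simplex: three of its vertices lie on some coordinate hyperplane $\{x_i=0\}$ and the fourth has $x_i=1$, so $\tau$ is a $B_1$-facet and we are done; from now on $\tau$ has at least five vertices. Second, since $\dim\tau=3$ there are four affinely independent points of $\tau$, and by hypothesis they span a $B$-simplex whose base is a genuine triangle on some $\{x_i=0\}$; hence $\tau\cap\{x_i=0\}$ is at least $2$-dimensional, and it is a proper face because the positive normal covector prevents $\tau$ from lying inside $\{x_i=0\}$, so it is exactly a facet. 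Relabel so this happens for $i=1$, set $\sigma:=\tau\cap\{x_1=0\}$, and let $U$ be the set of vertices of $\tau$ with $x_1\ge 1$; then $U\neq\emptyset$ and $\sigma$ has at least three vertices.

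If $|U|=1$, say $U=\{u\}$, then $\tau$ is the pyramid over $\sigma$ with apex $u$, and the only $3$-simplices it contains are spanned by three lattice points of $\sigma$ together with $u$; each of these must be a $B$-simplex. If $u_1=1$ this is automatic via the coordinate $x_1$ and $\tau$ is a $B_1$-facet. If $u_1\ge 2$ the coordinate $x_1$ can never be the base coordinate, so for every lattice triangle $T$ in $\sigma$ some coordinate $x_m$ with $m\neq 1$ takes the values $\{0,0,0,1\}$ on $T\cup\{u\}$; running through a triangulation of $\sigma$, comparing overlapping triangles, and using that no lattice point of $\sigma$ can have all of $x_2,x_3,x_4$ equal to $0$ (it would be the origin, which is not on the support plane), one forces all of $\sigma$ into one further coordinate hyperplane $\{x_m=0\}$ with $u_m=1$, so again $\tau$ is a $B_1$-facet.

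The heart of the proof is the case $|U|\ge 2$, and the basic tool is the reformulation that an affinely independent quadruple of lattice points is a $B$-simplex exactly when some coordinate takes the multiset of values $\{0,0,0,1\}$ on it. Taking two vertices $u,u'\in U$ and a triangle inside $\sigma$, and running through the handful of tetrahedra these points span, one constrains, up to permuting coordinates, how every vertex of $\tau$ can sit with respect to $x_1$ and one further coordinate. The resulting possibilities are: $\tau$ is again a $B_1$-facet with base on a coordinate hyperplane other than $\{x_1=0\}$; some coordinate $2$-plane projects all of $\tau$ onto the triangle $(0,0),(1,0),(0,1)$, i.e.\ $\tau$ is a $B_2$-facet; two coordinates, say $x_1,x_2$, split the vertices into the groups of a border together with its connecting triangle $ABC$, i.e.\ $\tau$ is a flat border; or the configuration closes up so rigidly --- $\tau\cap\{x_i=0\}$ a triangle for every $i$, and no two vertices sharing two nonzero coordinates --- that $\tau$ must be exactly the standard cross-polytope $\{e_i+e_j\}$ of item~(4).

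I expect the main obstacle to be precisely this last case analysis, with the cross-polytope as its most delicate outcome. The difficulty is that the $B$-facet property is not inherited by subconfigurations in any naive way: the standard cross-polytope itself contains quadruples that are not $B$-simplices, but every one of those quadruples turns out to be affinely dependent. So one must keep careful track of which quadruples are affinely independent and show that only the four listed shapes admit a large enough supply of affinely dependent exceptions; checking that the ``maximally symmetric'' case is forced to be exactly $\{e_i+e_j\}$ and nothing larger is the crux. A secondary technical point is that $\tau$ may contain lattice points that are not vertices of its convex hull, which must be accommodated both in the $|U|=1$ triangulation argument and when drawing the $B_2$ and flat-border conclusions.
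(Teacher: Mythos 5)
Your approach — choosing a coordinate hyperplane $\{x_1=0\}$ that cuts a $2$-dimensional facet $\sigma$ of $\tau$ and splitting on the number $|U|$ of lattice points above it — is genuinely different from the paper's. The paper decomposes $\tau$ according to which kind of \emph{internal V-face} (V-triangle, V-segment, or V-point) it contains (Claim~\ref{internal_claim}), proves one lemma per branch (Lemmas~\ref{triangle}, \ref{segment}, \ref{point}), and in the latter two applies dimension-reduction tools (the projection Lemma~\ref{proj_lem} to $B$-polytopes and the section Lemma~\ref{section_lem} to marked $B$-polytopes) to collapse the problem to the explicit $2$-dimensional classifications of Lemmas~\ref{d-2} and~\ref{B_in_pol}. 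What the paper's machinery buys is a systematic reduction to the plane, where the case analysis is small; your direct route has to carry out the equivalent bookkeeping in $\R^4$ by hand.

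The concrete gap is that the heart of the argument --- the case $|U|\ge 2$ --- is only asserted, not proved. ``Running through the handful of tetrahedra'' one ``constrains how every vertex can sit'' is exactly the statement of the problem, not its solution: you must actually show why the flat border acquires its rigid structure (the distinguished triangle $ABC$ of the form $(0,0,\star,\star),(0,0,\star,\star),(1,1,\star,\star)$ with everything else confined to $\{x_1=0\}\cup\{x_2=0\}$), and why the ``maximally symmetric'' subcase produces only the cross-polytope $\{e_i+e_j\}$. Essentially all of the mathematical content of Theorem~\ref{main_clas_th} --- which in the paper is distributed across Lemmas~\ref{triangle}--\ref{point} together with the two $2$-dimensional classifications --- lives precisely in that unexecuted case split. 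You flag it as the expected obstacle; you do not cross it. A secondary soft spot is the $|U|=1$, $u_1\ge 2$ branch: you tacitly take the apex of each $B$-tetrahedron $T\cup\{u\}$ to be $u$ (so $u_m=1$ and $T\subset\{x_m=0\}$), but a priori the apex could be a point of $T$ with $u_m=0$; until that possibility is excluded, the conclusion that all of $\sigma$ lies in a single further hyperplane $\{x_m=0\}$ with $u_m=1$ does not follow, and this is exactly the regime (apex on a coordinate ray) where the paper's Lemma~\ref{point} produces a non-$B_1$ outcome. Neither observation kills your strategy, but as written the proposal identifies a plausible route without supplying the classification it is meant to prove.
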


\begin{Remark}
    Every configuration described in the Theorem is always a $B$-facet except for flat borders, see the example below.
\end{Remark}

\begin{ex}
    The set 
    $$(0,0,0,5) (0,0,1,4) (1,1,0,3) (1,0,2,2) (0,1,2,2)$$
    is a flat border (even flat $B^2$-border from \cite[Definition 5.1]{ELT22}) but not a $B$-facet. 
\end{ex}

The example below is a generalization of an example discovered by \cite{LPS23}.

\begin{ex}\label{exotic_B_ex}
    There are examples of $B$-facets which are flat $B$-borders (i.e. flat borders, but not flat $B^2$-borders, see \cite[Definition 5.1]{ELT22}). For instance, there are two  such examples:
    \begin{enumerate}
        \item \textbf{Flat $\mathbf{B}$-border pyramid}, i.e. a set of 5 points of the form:
    $$(0,0,0,\star),(0,0,a,\star),(a,0,0,\star),(0,1,1,\star), (1,1,0,\star),$$
    where the last four points are contained in an affine 2-space.
    \item \textbf{Flat $\mathbf{B}$-border circuit}, i.e. a set of 5 points of the form:
    $$(0,0,0,\star),(1,1,0,0),(0,1,0,1),(\star,0,1,0), (0,0,\star,0)$$
    \end{enumerate}
    
\end{ex}

\subsection{Sketch of the proof: main lemmas which imply the Theorem}

The following claim is obvious since $\t$ has positive normal covector.

\begin{claim}\label{On_ray1_claim}
    If a unit point on a coordinate ray is in $B$-facet $\tau$ then $\tau$ is a $B_1$-facet.
\end{claim}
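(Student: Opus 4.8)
The plan is to derive everything from the single structural fact encoded in the hypotheses: $\t$ lies on a supporting hyperplane $H=\{\sum_j a_j x_j=b\}$ whose normal covector $a=(a_1,\dots,a_n)$ is strictly positive, say $a_j\ge 1$ for all $j$ (this is exactly what "positive normal covector" means). In fact the full $B$-facet hypothesis — that every $(n-1)$-simplex inside $\t$ is a $B$-simplex — will not be used at all, which is precisely why the claim is "obvious": positivity of the normal alone already pins down the shape.

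First I would normalize at the given unit point. Let $e_i\in\t$ be the unit point on the $i$-th coordinate ray, i.e.\ the lattice point with $i$-th coordinate $1$ and all other coordinates $0$. Plugging $e_i$ into the equation of $H$ yields $a_i=b$. Next I would bound the $i$-th coordinate of an arbitrary point: for any $p\in\t$ we have $\sum_j a_j p_j=b=a_i$, and since all $a_j,p_j\ge 0$ this forces $a_i p_i\le a_i$, hence $p_i\le 1$; as $p\in\Z^n_{\ge 0}$ we get $p_i\in\{0,1\}$. Moreover, if $p_i=1$ then $\sum_{j\ne i}a_j p_j=0$, and strict positivity of the $a_j$ forces $p_j=0$ for all $j\ne i$, i.e.\ $p=e_i$. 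Therefore $e_i$ is the only point of $\t$ whose $i$-th coordinate equals $1$, and every other point of $\t$ lies in the coordinate hyperplane $\{x_i=0\}$.

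Finally I would assemble the conclusion. The point $e_i$ is a vertex of $\conv(\t)$ (it is not a convex combination of the remaining points, all of which satisfy $x_i=0<1$), while every other vertex of $\conv(\t)$ lies in $\{x_i=0\}$. Hence $\conv(\t)$ is the pyramid with apex $e_i$ over the base $\conv(\t)\cap\{x_i=0\}$; this base has dimension $n-2$ because $\dim\t=n-1$ and coning off a single apex raises dimension by exactly $1$, and the height of the pyramid is $(e_i)_i=1$. Thus $\t$ is a pyramid of height $1$ with base on the coordinate hyperplane $\{x_i=0\}$, i.e.\ a $B_1$-facet.

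I do not anticipate a real obstacle; the only point requiring a line of care is the dimension bookkeeping in the last paragraph (that $\t\setminus\{e_i\}$ genuinely spans $\{x_i=0\}$ as an $(n-2)$-flat rather than something smaller), and this is immediate from $\dim\t=n-1$ together with the fact that $e_i$ is a single vertex lying off that hyperplane.
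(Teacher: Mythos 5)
Your proof is correct and follows exactly the route the paper has in mind: the paper gives no written proof, only the remark that the claim is ``obvious since $\t$ has positive normal covector,'' and your argument is precisely the unwinding of that remark — evaluate the support functional at $e_i$, conclude every other point lies in $\{x_i=0\}$, and read off the pyramid structure. One cosmetic note: positivity of the normal means $a_j>0$, which is all you actually use; the stronger ``$a_j\ge 1$'' holds once one normalizes the covector to be primitive integral, but the argument doesn't need it.
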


Thus, we assume that $\tau$ contains no unit points on coordinate rays.

\begin{definition}
    Face $F$ is called a \textbf{V-face} if it is contained in a coordinate subspace of dimension $dim(F)+1$. It is called \textbf{internal} if it does not contain any proper V-faces.
\end{definition}

\begin{claim}\label{internal_claim}
    Every bounded facet of Newton polyhedron is a V-face. Every V-face contains an internal V-face. Internal V-face is never a $B$-facet (in the corresponding coordinate subspace).
\end{claim}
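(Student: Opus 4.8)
The plan is to establish the three assertions in order, each building on the previous.

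First, \emph{every bounded facet of a Newton polyhedron is a V-face}: a bounded facet $\tau$ has a strictly positive normal covector $a = (a_1,\dots,a_n)$ with all $a_i > 0$, so $\tau$ lies on the plane $\sum_i a_i x_i = c$. The affine span of $\tau$ has dimension $n-1$, and I would argue that it must meet some coordinate hyperplane $\{x_i = 0\}$ in an affine subspace of dimension $n-2$; indeed, if the affine span intersected every $\{x_i=0\}$ in dimension $\le n-3$ (or not at all), then translating $\tau$ slightly toward the origin in the direction of $-a$ would keep it inside $\Z^n_{\ge 0}$-territory in a way that contradicts $\tau$ being a facet (a minimal face of the Newton polyhedron in the $a$-direction). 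Cleaner: since the points of $\tau$ are lattice points with nonnegative coordinates and $\tau$ is bounded, the minimum of each coordinate function $x_i$ over $\tau$ is attained; I claim at least one of these minima forces $\tau$ into a coordinate hyperplane's worth of codimension, which is exactly the statement that $\tau$ is contained in a coordinate subspace of dimension $\dim(\tau)+1 = n-1$... wait, that would make $\tau$ a V-face only if it sits in an $n-1$-dimensional coordinate subspace. Here I would invoke the standard fact that a compact face of a Newton polyhedron in $\R^n_{\ge 0}$, being a face ``visible from the origin,'' lies in the coordinate subspace spanned by those $e_i$ with $x_i \not\equiv 0$ on $\tau$, and a dimension count (compactness plus positivity of the normal) pins this subspace to dimension $\dim \tau + 1$.

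Second, \emph{every V-face contains an internal V-face}: this is an easy induction on dimension. If a V-face $F$ of dimension $d$ (sitting in a coordinate subspace of dimension $d+1$) is not already internal, it contains a proper V-subface $F'$, which is itself a V-face of strictly smaller dimension; iterate, and the process terminates since dimensions strictly decrease and $0$-dimensional V-faces (points with a coordinate equal to $0$ — or really, the minimal case) are internal vacuously.

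Third, and this is the heart, \emph{an internal V-face is never a $B$-facet}: suppose $F$ is an internal V-face that is also a $B$-facet, living in a coordinate subspace $V \cong \R^{d+1}$ with $d = \dim F$. Being a $B$-facet means every $d$-dimensional simplex with vertices in $F$ is a $B$-simplex, i.e.\ a height-$1$ pyramid over a coordinate hyperplane of $V$. I would pick a $d$-simplex $\sigma = \{v_0,\dots,v_d\}$ in $F$ spanning the affine hull of $F$; since $\sigma$ is a $B$-simplex, after relabeling, $v_1,\dots,v_d$ lie on a coordinate hyperplane $\{x_j = 0\} \cap V$ and $v_0$ has $j$-th coordinate $1$. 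The points $v_1,\dots,v_d$ span a $(d-1)$-dimensional face of... no — they span a $(d-1)$-dimensional subset of $F \cap \{x_j=0\}$, and I must check $F \cap \{x_j = 0\}$ is a proper V-face of $F$ (it has dimension $d-1$, lives in the $d$-dimensional coordinate subspace $V \cap \{x_j=0\}$, hence is a V-face; it is proper as long as $F \not\subset \{x_j=0\}$, which holds because $v_0 \notin \{x_j=0\}$). That contradicts internality. The only gap to close is why $F \cap \{x_j=0\}$ is genuinely a \emph{face} of $F$ rather than an arbitrary slice — here I use that $\{x_j \ge 0\}$ is a supporting halfspace (all coordinates are nonnegative on $\Z^n_{\ge 0}$), so its boundary hyperplane cuts out a face.

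The main obstacle I anticipate is the third step's bookkeeping: one must ensure the chosen spanning simplex really does have $d$ of its vertices on a single coordinate hyperplane \emph{of $V$} (not merely on some coordinate hyperplane of $\R^n$ that could be all of $V$ or irrelevant to it), and that the resulting slice has the right dimension $d-1$ — i.e.\ that $v_1,\dots,v_d$ are affinely independent within that hyperplane, which follows from affine independence of $v_0,\dots,v_d$. A secondary subtlety is handling low-dimensional edge cases ($d = 0$: a $0$-dimensional internal V-face would be a point that is a $B$-simplex, i.e.\ has a unit coordinate, but ``internal'' should be read so that this case is excluded or vacuous), which should be dispatched with a sentence.
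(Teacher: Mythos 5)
The paper states this Claim without proof, treating it as essentially clear, so there is no "paper proof" to compare your attempt against; what matters is whether your argument is correct. Your third part is the right argument and is the only substantive one, but you stumble badly on the first and leave a real hole in the third.

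On the first assertion, you are overcomplicating and your arithmetic is off: a facet $\tau$ has $\dim(\tau) = n-1$, so the V-face condition asks that $\tau$ lie in a coordinate subspace of dimension $\dim(\tau)+1 = n$, not $n-1$ as you write. The only coordinate subspace of dimension $n$ is all of $\R^n$, so every bounded facet is a V-face \emph{trivially}. The "standard fact" you invoke — that the coordinate subspace spanned by the $e_i$ with $x_i \not\equiv 0$ on $\tau$ has dimension exactly $\dim\tau + 1$ — is not true in general (a compact facet can have all coordinates non-constant, in which case that span is all of $\R^n$, which happens to be fine here but is not what your dimension count predicts for smaller faces). Fortunately none of this is needed.

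On the third assertion, your argument for $d = \dim F \ge 1$ is correct: pick a spanning $d$-simplex, use the $B$-simplex structure to exhibit a $(d-1)$-dimensional slice $F \cap \{x_j = 0\}$, observe it is a face (since $\{x_j \ge 0\}$ is supporting) of dimension $d-1$ inside the $d$-dimensional coordinate subspace $V \cap \{x_j = 0\}$, hence a proper V-face, contradicting internality. However, the case $d = 0$ is not a "secondary subtlety ... dispatched with a sentence" — as stated, the claim is genuinely \emph{false} for $d=0$: a point on a coordinate ray whose nonzero coordinate equals $1$ is an internal V-point and \emph{is} a $B$-facet of that ray (being a zero-dimensional $B$-simplex). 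The claim is only true in context because of the running assumption installed immediately before it, namely that $\tau$ contains no unit points on coordinate rays (the remark after Claim~\ref{On_ray1_claim}). You need to invoke that assumption explicitly to close the $d=0$ case; your simplex argument produces an empty slice there and gives no contradiction on its own.
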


We obtain Theorem from the following Lemmas.

\begin{lemma}\label{triangle}
    If a $B$-facet $\t\subset \Z^4$ contains an internal V-triangle then $\tau$ is either $B_1$-facet or (degenerated) $B_2$-facet or standard cross-polytope.
\end{lemma}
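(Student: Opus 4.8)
The plan is to cut $\tau$ into two coordinate ``levels'' and then finish by a short case analysis on the size of the upper level.

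\emph{Normalisation.} After permuting coordinates I may assume the given internal V-triangle $T=\{P_1,P_2,P_3\}$ lies in $\{x_4=0\}\cong\Z^3_{\ge 0}$. Internality means $T$ has no proper V-faces: no vertex of $T$ is on a coordinate ray (so each $P_j$ has at least two positive coordinates among $x_1,x_2,x_3$), and no edge of $T$ lies in a coordinate $2$-plane (so no two vertices of $T$ share a zero coordinate). In particular the three vertices do not all lie in one coordinate $2$-plane, so $\{x_4=0\}$ is the only coordinate hyperplane containing $T$; and by Claim~\ref{internal_claim} the triangle $T$ is itself not a $B$-simplex.

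\emph{Reduction to two levels.} First I take any $Q\in\tau$ outside $\operatorname{aff}(T)$; then $\{P_1,P_2,P_3,Q\}$ is a $3$-simplex, hence a $B$-simplex, i.e. a pyramid of height $1$ with base three of the four points lying in some $\{x_i=0\}$ and apex the fourth point at $x_i=1$. The apex cannot be a $P_j$: the other two vertices of $T$ would then both have $x_i=0$, which for $i\ne 4$ makes the edge joining them a V-edge, and for $i=4$ contradicts $P_j\in\{x_4=0\}$. So the apex is $Q$, the base is $T$, and $i=4$; thus $Q_4=1$. The same kind of argument, using that a $3$-simplex contained in $\{x_4=0\}$ can never be a $B$-simplex under our constraints on $T$, shows $\tau\cap\{x_4=0\}\subset\operatorname{aff}(T)$. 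Hence
$$\tau=R\sqcup S,\qquad R=\tau\cap\{x_4=0\}\subset\operatorname{aff}(T),\qquad S=\tau\cap\{x_4=1\},$$
with $\{P_1,P_2,P_3\}\subset R$, and $S\ne\varnothing$ (else $\tau$ would be $2$-dimensional).

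\emph{Casework on $|S|$.} If $|S|=1$, then $\tau$ is a pyramid of height $1$ with base $R\subset\{x_4=0\}$ and apex the single point of $S$, that is, a $B_1$-facet. If $|S|\ge 2$, I would apply the $B$-simplex property to the $4$-tuples consisting of two points of $S$ and two vertices of $T$ (and, when $|S|\ge 3$, also to $4$-tuples using three points of $S$): each such $4$-tuple is either affinely independent, and then some coordinate $i\in\{1,2,3\}$ must vanish on one vertex of $T$ and on all of $S$, with the other chosen vertex of $T$ having $x_i=1$; or it is affinely dependent, and then its four points are coplanar. Running this over all pairs of vertices of $T$ and all pairs of points of $S$ — and using that the common value of the positive normal covector on $\tau$ forces two points of $S$ that vanish on the same pair of coordinates to coincide — should leave only two possibilities: either $\tau$ projects, onto a suitable coordinate $2$-plane, onto the triangle $(0,0),(1,0),(0,1)$, so that $\tau$ is a (degenerated) $B_2$-facet; or $R=\{(1,1,0,0),(1,0,1,0),(0,1,1,0)\}$ and $S=\{(1,0,0,1),(0,1,0,1),(0,0,1,1)\}$, so that $\tau$ is the standard cross-polytope. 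In both branches I would also have to check that $R$ contains no lattice point beyond the vertices of $T$, since such a point would create a non-$B$ $3$-simplex in $\tau$ or contradict boundedness of the facet.

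\emph{Main obstacle.} The normalisation, the two-level reduction and the case $|S|=1$ are essentially formal. The hard part will be $|S|\ge 2$: keeping track of which coordinate is forced to vanish on which points across all the relevant $4$-tuples; dealing with those $4$-tuples that happen to be affinely dependent, where the $B$-simplex property is vacuous and one must instead exploit the common positive normal covector to recover the missing linear relations; and verifying that the surviving configurations are exactly the degenerated $B_2$-facets and the standard cross-polytope and nothing outside the list — in particular that $|S|\le 3$ and that $R$ coincides with the vertex set of $T$.
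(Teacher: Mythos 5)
Your normalisation and two-level decomposition $\tau=R\sqcup S$ match the paper's opening step: the paper likewise shows that for any tetrahedron $A_1A_2A_3Q$ the base must be $Ox_1x_2x_3$ (since the internal triangle has no two vertices in a common coordinate $2$-plane), so that $\tau\subset\{x_4\le 1\}$, and the case $|S|\le 1$ is exactly the ``$\tau$ is a $B_1$-facet'' escape. Up to here your argument is sound and essentially identical to the paper's.

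The gap is that you stop precisely where the lemma gets hard. For $|S|\ge 2$ you only describe a programme (``I would apply the $B$-simplex property\dots should leave only two possibilities\dots I would also have to check\dots'') and then list the difficulties under ``main obstacle'' without resolving any of them. That is a plan, not a proof. The paper actually carries it out in four concrete steps: it fixes a segment $C_1C_2\subset S$, shows that up to reordering $A_1=({>}0,{>}0,0,0)$, $A_{2,3}=(\star,\star,1,0)$, $C_{1,2}=(\star,\star,0,1)$ by analysing the tetrahedra $C_1C_2A_iA_j$; deduces $\tau\subset\{x_3\le 1\}\cap\{x_4\le 1\}$ from the tetrahedra $A_1C_1C_2G$ and $A_1A_2A_3G$; and finally splits on whether a point $G=(\star,\star,1,1)$ exists, obtaining the (possibly degenerated) $B_2$-facet in the first case and, by a further covector argument, the standard cross-polytope in the second. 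None of this appears in your proposal, and it is the whole content of the lemma.

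One further warning about your sketch: you say you would need to verify ``that $|S|\le 3$ and that $R$ coincides with the vertex set of $T$''. Neither holds in general. In the $B_2$ branch $\tau$ may contain any number of additional points with $(x_3,x_4)\in\{(0,0),(1,0),(0,1)\}$, so both $R$ and $S$ can be strictly larger than three points; only in the cross-polytope branch do they collapse to the six listed vertices. So even as a roadmap your ``main obstacle'' paragraph aims at the wrong invariants, which suggests you would not have landed on the correct case split had you pushed further.
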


\begin{lemma}\label{segment}
    If a $B$-facet $\t\subset \Z^4$ contains a V-segment which is not a $B$-segment then it is either $B_1$-facet or $B_2$-facet or flat border.
\end{lemma}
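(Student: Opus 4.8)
The plan is to analyze the structure of $\tau$ around a V-segment $S$ that fails to be a $B$-segment. By definition of $V$-face, $S$ lies in a coordinate $2$-plane, say $\{x_3 = x_4 = 0\}$, so $S$ is a segment in the $(x_1,x_2)$-plane; since it is not a $B$-segment it is not a pyramid of height $1$ over a coordinate axis, which (together with Claim \ref{On_ray1_claim}, so no unit points on rays) forces its two endpoints to be $(a,0,0,0)$ and $(0,b,0,0)$ with $a,b\ge 2$, or more precisely an integer segment from the $x_1$-axis to the $x_2$-axis containing a lattice point with both first coordinates positive — the key primitive lattice point being $(1,1,0,0)$ after rescaling, which is the shape appearing in the ``flat border'' and ``standard cross-polytope'' conclusions. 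First I would pin down, using the $B$-facet condition applied to triangles spanned by $S$ together with one further vertex $v$ of $\tau$, severe restrictions on where $v$ can lie: the triangle on two points of $S$ and $v$ must be a $B$-simplex, i.e. a height-$1$ pyramid over a coordinate plane, which constrains the coordinates $(v_3,v_4)$ and the way $v$ sits over the $(x_1,x_2)$-plane.

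Next I would split into cases according to how many coordinate hyperplanes the vertices off $S$ avoid. The ``generic'' branch is when some vertex $v$ of $\tau$ has $v_1>0$ and $v_2>0$: then, combining the $B$-simplex condition on $\{v\}\cup\{\text{two points of }S\}$ with positivity of the normal covector, one should be able to show the primitive point $(1,1,\star,\star)$ type vertex exists and that every remaining vertex must lie on $\{x_1=0\}\cup\{x_2=0\}$ — this is exactly the flat border conclusion. The complementary branch is when every vertex outside $S$ lies in $\{x_1=0\}$ or in $\{x_2=0\}$; here I would argue that either $\tau$ degenerates to a pyramid of height $1$ (a $B_1$-facet), or its projection to the $(x_3,x_4)$- or another coordinate $2$-plane collapses to the standard triangle (a $B_2$-facet), or one is again in the flat-border situation. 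Throughout, Claim \ref{internal_claim} is the engine: any $V$-subface must itself contain an internal $V$-face, which can never be a $B$-facet, so $\tau$ cannot contain ``too complicated'' lower-dimensional coordinate-subspace faces, and this kills most of the would-be exotic configurations.

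I expect the main obstacle to be the bookkeeping in the branch where vertices off $S$ are distributed between the two hyperplanes $\{x_1=0\}$ and $\{x_2=0\}$ with some also having $x_3=0$ or $x_4=0$: one has to rule out or classify the configurations mixing a segment toward the $x_1$-axis, a segment toward the $x_2$-axis, and extra points in various coordinate planes, showing they either assemble into a $B_2$-facet / $B_1$-facet or must contain a forbidden internal $V$-face. The delicate point is making sure the ``flat border'' really is forced in the shape stated in Theorem \ref{main_clas_th}(3) — i.e. that the triangle $ABC$ of type $(0,0,\star,\star),(0,0,\star,\star),(1,1,\star,\star)$ appears and that \emph{all} other points lie in $\{x_1=0\}\cup\{x_2=0\}$ — rather than some near-miss with a stray point having both $x_1,x_2>0$; the $B$-simplex condition applied to that stray point together with two suitable points on $S$ should give the contradiction, and verifying this uniformly is where the care is needed.
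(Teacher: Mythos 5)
Your plan has a genuine gap, and a conceptual error makes the main tool you invoke inapplicable as stated. The $B$-facet condition in $\Z^4$ constrains $(n-1)=3$-dimensional simplices (tetrahedra), not triangles: the sets $\{v\}\cup\{\text{two points of }S\}$ you want to test are $2$-simplices, and nothing in Definition~\ref{B_facet_def} says anything about them. So the ``severe restrictions on $v$'' you expect to pull out of those triangles simply aren't there; you would have to pick a fourth point and work with genuine tetrahedra, which is exactly the combinatorial explosion the paper avoids. The paper's route is to use Lemma~\ref{proj_lem}: since $S=\tau\cap E$ is a V-face but not a $B$-face for $E=Ox_1x_2$, the projection $p_{E}(\tau)\subset\Z^2$ is a two-dimensional $B$-polytope, and these are classified in Lemma~\ref{d-2} as either a $B_1$-polytope or a border polytope. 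Each of those two shapes then yields, after a short argument with tetrahedra back in $\Z^4$, the $B_1$, $B_2$, or flat-border conclusion. You do not use this reduction at all, and you never actually carry out the ``bookkeeping'' branch you flag as the obstacle, so the proposal is a sketch rather than a proof.

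There are also smaller inaccuracies. Your claim that the non-$B$ V-segment must have endpoints $(a,0,0,0)$ and $(0,b,0,0)$ with $a,b\ge 2$ is not forced by the hypotheses: a V-segment in $Ox_1x_2$ failing to be a $B$-segment need not touch either axis (e.g.\ $\{(2,1,0,0),(3,2,0,0)\}$), so your subsequent normalization to the primitive point $(1,1,0,0)$ is unjustified. You also list the standard cross-polytope among the possible conclusions, but that case belongs to Lemma~\ref{triangle}; Lemma~\ref{segment} concludes only $B_1$-facet, $B_2$-facet, or flat border. Finally, you lean on Claim~\ref{internal_claim} as ``the engine'' for this lemma, but that claim is used only in assembling the Lemmas into Theorem~\ref{main_clas_th}; the actual proof of Lemma~\ref{segment} does not need it, and invoking it here does not supply the missing case analysis.
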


\begin{lemma}\label{point}
    If a $B$-facet $\t\subset \Z^4$ is a pyramid with apex on a coordinate ray (and some 2-dimensional base) than it is either $B_1$-facet or $B_2$-facet or flat border (more precisely, flat $B$-border pyramid from Example \ref{exotic_B_ex}).
\end{lemma}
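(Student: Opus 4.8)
The plan is to reduce the statement to a combinatorial question about the $2$-dimensional base and then to project to $\Z^3$. By Claim~\ref{On_ray1_claim} I may assume the apex is $A=(0,0,0,c)$ with $c\ge 2$ (the cases $c=0$ and $c=1$ being impossible or already settled), and that the base $B=\tau\setminus\{A\}$ spans a $2$-plane $\Pi$ with $A\notin\Pi$ (as $\tau$ is $3$-dimensional). Since any four points of $B$ lie in $\Pi$ and are therefore affinely dependent, the $3$-simplices with vertices in $\tau$ are precisely the sets $\{A\}\cup T$ with $T\subseteq B$ a triangle; and $\{A\}\cup T$ is a $B$-simplex if and only if there is a coordinate $k\in\{1,2,3\}$ (the value $k=4$ is excluded because $A$ has $x_4=c\ge 2$) such that exactly one vertex of $T$ has $x_k=1$ and the other two have $x_k=0$. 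I will work with this reformulated condition on the base.

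Next I would project forgetting the last coordinate. On the support hyperplane of $\tau$, which has positive $x_4$-coefficient, the projection $\pi$ is injective, so $B':=\pi(B)$ is a faithful $2$-dimensional copy of $B$ inside $\Z^3_{\ge 0}$, contained in a plane avoiding the origin (so $0=\pi(A)\notin B'$). Setting $Z_k=\{v\in B':v_k=0\}$, $U_k=\{v\in B':v_k=1\}$, $L_k=\{v\in B':v_k\ge 2\}$, the condition becomes: every triangle $T\subseteq B'$ has a $k$ with $T\subseteq Z_k\cup U_k$ and $|T\cap U_k|=1$. Two facts I will use: no point of $B'$ has all three coordinates $\ge 2$ (it would lie in no admissible triangle), and each coordinate is nonzero somewhere on $B'$ (a $3$-dimensional facet with positive normal covector cannot lie in a coordinate hyperplane, which meets its support plane in a $2$-dimensional set). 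I also note the dictionary: $\tau$ is a $B_1$-facet iff $B'=Z_k\cup U_k$ with $|U_k|=1$ for some $k$; $\tau$ is a $B_2$-facet iff for some pair $k\ne l$ every $v\in B'$ has $(v_k,v_l)\in\{(0,0),(1,0),(0,1)\}$ with $(1,0)$ and $(0,1)$ both occurring; and $\tau$ is a flat $B$-border pyramid iff, after permuting the first three coordinates, $B'=\{(0,0,a),(a,0,0),(0,1,1),(1,1,0)\}$ for some $a\ge 1$.

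The classification of $B'$ then proceeds as follows. If $B'=Z_k\cup U_k$ with $|U_k|=1$ for some $k$, then $\tau$ is a $B_1$-facet. Otherwise $|B'|\ge 4$ (a single triangle would already force such a $k$), and I split on whether $B'$ has a coordinate $\ge 2$. If not, then $B'\subseteq\{0,1\}^3\setminus\{0\}$ is $2$-dimensional with at least four points, so (a plane meets the cube in at most four vertices) $B'$ equals the four cube vertices lying in one of the twelve planes that contain four of them; among these the planes $x_k=0$ and $x_k=x_l$ pass through the origin, the planes $x_k=1$ carry a triangle admitting no witnessing coordinate, and each plane $x_k+x_l=1$ carries exactly the configuration making $\tau$ a $B_2$-facet, so $\tau$ is a $B_2$-facet. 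If $B'$ does contain a point $P$ with a coordinate $\ge 2$, then $P$ has one or two ``large'' directions (never all three), and every triangle through $P$ must be witnessed by a coordinate outside those directions; combining this with the facts that a point lying in no triangle of $B'$ would make $B'$ collinear, that a point of $L_k$ lies in no triangle witnessed by $k$, that $B'$ is $2$-dimensional, and that $B'$ is contained in a single plane avoiding the origin, one shows that $B'$ either falls back into one of the previous cases or consists of exactly four points which, up to permuting the first three coordinates, equal $\{(0,0,a),(a,0,0),(0,1,1),(1,1,0)\}$; in that last case the coplanarity of these four points forces the two large coordinates to coincide, which is precisely the flat $B$-border pyramid of Example~\ref{exotic_B_ex}.

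I expect the main obstacle to be exactly this last case. Everything else reduces to a short finite check, but the situation with a coordinate $\ge 2$ — which is where the classification of \cite{ELT22} had its gap — requires bounding $|B'|$ by $4$ once $B_1$ and $B_2$ have been excluded, playing several simultaneous triangle conditions off against the single coplanarity relation in order to locate the vertices $(0,1,1)$ and $(1,1,0)$ and to force the two large coordinates equal, and arranging the sub-cases (one versus two large directions at $P$, and the values in $\{0,1\}$ of $P$'s remaining coordinates) so that none of them yields a spurious new configuration. Nothing here is conceptually deep, but the bookkeeping is where the real work lies.
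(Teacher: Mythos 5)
Your reduction is correct and is, modulo a change of bookkeeping, the same as the paper's. Taking $A=(0,0,0,c)$, $c\ge 2$, and projecting the base $B$ to $B'\subset\Z^3_{\ge 0}$ along $x_4$, the $B$-facet condition for the pyramid becomes your ``witnessing coordinate'' condition on triangles of $B'$; the paper encodes the identical information via Lemma~\ref{section_lem} by passing to the section $\tau_H$ with marked sides, so that the problem becomes ``classify marked $B$-polygons'' (Lemma~\ref{B_in_pol}). Your dictionary between the three target types and conditions on $B'$ is also correct, and the observation that coplanarity of $\{(0,0,a),(b,0,0),(0,1,1),(1,1,0)\}$ forces $a=b$ is right (the relevant $3\times 3$ determinant is $a-b$), though to be a genuinely new type you need $a\ge 2$, not $a\ge 1$.

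The genuine gap is exactly where you flag it. The case where $B'$ contains a point with some coordinate $\ge 2$ is the entire mathematical content of the lemma (it is the configuration missed in \cite{ELT22}), and your argument there is only ``one shows that $B'$ either falls back into one of the previous cases or consists of exactly four points \dots'', followed by an explicit admission that the bookkeeping is not done. In the paper this step is a separate, fully worked lemma (Lemma~\ref{B_in_pol}): first one proves that if $\alpha$ is not $B_1$-marked then there is a single marked side $L$ within lattice distance $1$ of every point of $\alpha$ (a short contradiction argument with four points $A,B,C,D$); then one shows that having $\ge 3$ points on either side of $L$ forces the $B_2$ strip; and only the residual four-point configuration $\{(0,0),(a,0),(0,1),(1,d)\}$ remains, where checking two specific triangles pins down $d=1$ and splits $a=1$ (a $B_2$-facet) from $a>1$ (the flat border). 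None of these steps appears in your write-up. So while your plan is sound and mirrors the published proof, the statement is not actually proved: you need to supply the analogue of Lemma~\ref{B_in_pol} --- the bound $|B'|\le 4$ after excluding $B_1$/$B_2$, the location of the two height-one vertices, and the elimination of all other sub-cases --- before the argument closes.
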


Let us prove that these Lemmas provide the Theorem \ref{main_clas_th}. It is sufficient to verify that every facet satisfies conditions of one of the Lemmas. Due to Claim \ref{internal_claim} every $B$-facet contains an internal V-triangle or V-segment or V-point. If it contains an internal V-triangle or V-segment then it satisfies Lemma \ref{triangle} or \ref{segment}. 

Suppose $\tau$ contains neither internal V-triangles nor internal V-segments. Then it contains an internal V-point i.e. a point on a coordinate ray. If the set of the rest points is $2$-dimensional then it satisfies Lemma \ref{point}. Otherwise some of the rest points form a V-tetrahedron. The V-tetrahedron contains an internal V-face which is neither a triangle nor a segment, so it is a V-point, i.e. a point on other coordinate ray. The segment between two points on coordinate rays is a V-segments, but not $B$-segment (by the assumption below Claim \ref{On_ray1_claim}) and satisfies Lemma \ref{segment}.

The rest of the text is devoted to proving the lemmas. Proof of Lemma \ref{triangle} is straight forward, Lemma \ref{segment} uses notion of \textbf{$\mathbf{B}$-polytopes} (our definition is different from \cite[Definition 1.4]{ELT22}). from \S \ref{B-pol} and Lemma \ref{point} uses \textbf{marked $\mathbf{B}$-polytopes}. Note that $B$-polytopes are not a special case of marked $B$-polytopes though the definitions are very similar. These definitions provide some tools for reducing the dimension of the problem (in arbitrary dimension) (see Lemma \ref{proj_lem} and Lemma \ref{section_lem}).

\section{$\mathbf {B}$-polytopes} \label{B-pol}

Note that this definition of $B$-polytope differs much from \cite[Definition 1.4]{ELT22}.

\begin{definition}\label{B_pol_def}
    A $\mathbf{B}$\textbf{-polytope} in $\Z^n_{\ge 0}$ is an $n$-dimensional set of lattice points such that every $(n-1)$-dimensional simplex with vertices in it is a $B$-simplex or a simplex whose affine span contains the origin. 
\end{definition}

\begin{claim}
 The only $B$-polytope in $\Z^1_{\ge 0}$ is the set $\{0,1\}$.
\end{claim}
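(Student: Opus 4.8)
The plan is to unwind the definitions. A $B$-polytope in $\Z^1_{\ge 0}$ is a $1$-dimensional set $S$ of nonnegative integers such that every $0$-dimensional simplex (i.e.\ every single point) with vertices in $S$ is a $B$-simplex or has affine span containing the origin. By Definition \ref{B_simplex_def}, a zero-dimensional $B$-simplex in $\Z^1_{\ge 0}$ is a point one of whose coordinates equals $1$, i.e.\ the point $\{1\}$; and the affine span of a single point $\{k\}$ contains the origin precisely when $k=0$. Hence the condition on $S$ is exactly that every point of $S$ is either $0$ or $1$, so $S\subseteq\{0,1\}$. Since $S$ must be $1$-dimensional, its convex hull must be a genuine segment, which forces $S=\{0,1\}$.

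Concretely I would carry this out in three short steps. First, identify the zero-dimensional $B$-simplices in $\Z^1_{\ge 0}$: the only coordinate is $x_1$, so the unique such simplex is $\{x_1=1\}$. Second, identify which single points have affine span through the origin: the affine span of $\{k\}$ is $\{k\}$ itself, which contains $0$ iff $k=0$. Third, conclude that the defining condition of a $B$-polytope reads ``$S\subseteq\{0,1\}$,'' and combine with the dimension requirement $\dim S=1$ to get $S=\{0,1\}$, and conversely check that $\{0,1\}$ indeed satisfies the condition (its only $0$-simplices are $\{0\}$ and $\{1\}$, handled by the two cases respectively).

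There is essentially no obstacle here; the statement is a direct sanity check that the new Definition \ref{B_pol_def} behaves correctly in the base case $n=1$, and the only thing to be careful about is not overlooking the ``affine span contains the origin'' alternative, which is what lets the point $0$ belong to the $B$-polytope even though $0$ is not a $B$-simplex.
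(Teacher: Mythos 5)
Your proof is correct. The paper states this claim without proof, treating it as an immediate consequence of Definition~\ref{B_pol_def}, and your unwinding of the definitions (identifying the zero-dimensional $B$-simplex as $\{1\}$, observing that the ``affine span through the origin'' alternative admits exactly $\{0\}$, and then invoking the one-dimensionality requirement) is precisely the implicit argument.
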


For a coordinate subspace $E\subset \R^n$ denote by $p_{E}$ the projection along $E$ to the ortogonal complement $E^\bot$. 

\begin{lemma} \label{proj_lem}
    Consider a $B$-facet $\tau \subset \Z^n$. Consider a coordinate subspace $E\subset \R^n$ and suppose that $E\cap\tau$ is a V-face but not a $B$-face. Then the projection $p_E(\t)$ is a $B$-polytope.
\end{lemma}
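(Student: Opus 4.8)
The plan is to show directly that every $(n-2)$-dimensional simplex $S$ with vertices in $p_E(\tau)$ is either a $B$-simplex in $E^\bot$ or has affine span through the origin, which is exactly the defining property of a $B$-polytope (note $p_E(\tau)$ is $(n-1)$-dimensional since $E\cap\tau$ is a facet-direction complement and $\tau$ is $(n-1)$-dimensional). Fix such an $S$, pick lattice preimages $v_0,\dots,v_{n-2}\in\tau$ of its vertices, and let $w_0,\dots,w_{k}$ be the vertices of the $B$-face $E\cap\tau$ (so $k=\dim(E\cap\tau)$ and $\dim E = k+1$). First I would observe that the $v_i$ together with the $w_j$ span an $(n-1)$-dimensional affine subspace — the full affine span of $\tau$ — because projecting along $E$ kills exactly the $(k+1)$-dimensional direction of $E$ and returns the $(n-2)$-dimensional span of $S$, so adding back a basis of the lattice directions inside $E\cap\tau$ recovers all of $\tau$'s affine hull. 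Hence from these $n-1+?$ points one can extract an $(n-1)$-dimensional simplex $\Delta$ with vertices in $\tau$ whose vertex set is $\{v_0,\dots,v_{n-2}\}\cup\{\text{some }w_j\text{'s}\}$, in fact using all the $w_j$'s when $E\cap\tau$ is itself a simplex (which one may assume after first triangulating $E\cap\tau$, since the $B$-face property is inherited by sub-simplices).

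Since $\tau$ is a $B$-facet, $\Delta$ is a $B$-simplex: it is a height-$1$ pyramid over a base lying in some coordinate hyperplane $\{x_\ell=0\}$, with a unique apex at height $1$, i.e. with $x_\ell=1$. Now there are two cases according to where the apex of $\Delta$ sits. If the apex is one of the $w_j$, then all the $v_i$ lie in $\{x_\ell=0\}$; but the coordinate $x_\ell$ is one of the coordinates killed by $p_E$ precisely when $\{x_\ell=0\}\supset E$... here is the crux: I would split on whether $\ell$ is an "$E$-coordinate" (one of the coordinates spanning $E$) or not. If $\ell$ is an $E$-coordinate, then $p_E$ forgets $x_\ell$ and the images $p_E(v_i)$ satisfy no constraint from this — instead, I use that the base of $\Delta$ in $\{x_\ell=0\}$ still passes through a coordinate hyperplane after projection, or rather that $E\cap\tau$ being a $B$-face but the apex lying in it forces a contradiction with "$E\cap\tau$ is not a $B$-face". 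This is the step I expect to be the main obstacle: carefully bookkeeping which coordinate hyperplane survives the projection $p_E$ and ruling out the degenerate configurations where the $B$-simplex structure of $\Delta$ is "used up" entirely inside $E$.

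If instead $\ell$ is not an $E$-coordinate, then $\{x_\ell=0\}$ is preserved by $p_E$, and the whole structure descends: the base of $\Delta$ maps into $\{x_\ell=0\}\cap E^\bot$ and the apex maps to a point with $x_\ell=1$, so $S$ (being the sub-simplex of $p_E(\Delta)$ on the $p_E(v_i)$) either lies in $\{x_\ell=0\}$ — making its affine span... no, I need the apex among the $v_i$. Concretely: if the apex of $\Delta$ is some $v_i$, then $S=p_E(\{v_0,\dots,v_{n-2}\})$ has that vertex at height $1$ in $x_\ell$ and the rest at height $0$, so $S$ is a $B$-simplex in $E^\bot$, done. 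If the apex of $\Delta$ is some $w_j$, then every $v_i$ has $x_\ell=0$, so $S\subset\{x_\ell=0\}$; I then need this to imply the affine span of $S$ contains the origin, which follows because the $w_j$'s (spanning the $B$-face $E\cap\tau$) together with the base force, via the height-$1$ pyramid condition applied to $\tau$ as a whole, that $E\cap\tau$'s own base hyperplane coincides appropriately — and here one invokes Claim \ref{internal_claim} style reasoning to pin down that the remaining $v_i$'s affine hull, lying in a coordinate hyperplane and arising as a projection, must be a cone over $0$. I would finish by assembling these cases into the statement that every such $S$ meets the $B$-polytope criterion, hence $p_E(\tau)$ is a $B$-polytope.
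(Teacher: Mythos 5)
Your proposal takes the contrapositive of the paper's one-paragraph argument (lifting a simplex and appending vertices from $E\cap\tau$), but there are two real issues, one cosmetic and one substantive.

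The cosmetic one: the dimensions are stated as if $\dim E=1$. In general $p_E(\tau)$ is $(n-\dim E)$-dimensional (not $(n-1)$-dimensional), and the simplices you need to test are $(n-\dim E-1)$-dimensional (not $(n-2)$-dimensional). The lemma is used in the paper with $\dim E=2$ as well as $\dim E=1$, so this matters.

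The substantive gap is exactly the case you flag as ``the main obstacle,'' and you have not closed it. When the apex of $\Delta$ is some $w_j$ and the base hyperplane $\{x_\ell=0\}$ has $\ell$ an $E$-coordinate, the pyramid structure of $\Delta$ is entirely absorbed inside $E$: the $w_j$'s form a height-$1$ pyramid in $E$ over $\{x_\ell=0\}\cap E$, i.e.\ the $w_j$'s themselves form a $B$-simplex in $E$. This gives no information about the $v_i$'s, and your attempted conclusion that $\mathrm{aff}(S)\ni 0$ does not follow (since $\ell$ is an $E$-coordinate, $p_E$ zeroes out $x_\ell$ and $S\subset\{x_\ell=0\}$ automatically, so nothing is gained). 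The correct way out, and the key idea in the paper's proof, is to \emph{choose} the $w_j$'s in advance to be a $(\dim E-1)$-simplex in $E\cap\tau$ that is \emph{not} a $B$-simplex of $E$; such a simplex exists precisely because the hypothesis says $E\cap\tau$ is not a $B$-face. With that choice the ``apex among the $w_j$'s, $\ell$ an $E$-coordinate'' case immediately contradicts the choice of $w_j$'s, and the other cases are dispatched as you describe. Your version instead picks an arbitrary simplex in $E\cap\tau$ (or triangulates $E\cap\tau$), but a non-$B$-face can still contain plenty of $B$-simplices, so the contradiction need not appear. Finally, you should also reduce at the start to the case $\mathrm{aff}(S)\not\ni 0$ (the other case is one of the two allowed outcomes for a $B$-polytope) — this is also what guarantees the lifted points $v_i$ are affinely independent and that $\{v_i\}\cup\{w_j\}$ really is an $(n-1)$-simplex.
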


\begin{proof}
    Assume the converse. Consider a $(n-\dim(E)-1)$-simplex $Q_{E^\bot} \subset p_E(\tau)$ and a $(\dim(E)-1)$-simplex $Q_E$ in $E$ which are not $B$-simplices, such that the affine span of $Q_{E^\bot}$ does not contain the origin. Then any set of points $Q_E \cup Q_{E^\bot}^\prime \subset \tau$ such that $p_E(Q_{E^\bot}^\prime) \to Q_{E^\bot}$ is one to one, forms a simplex (since $\text{aff} (Q_{E^\bot})$ does not contain the origin), but not a $B$-simplex.
\end{proof}

\begin{lemma}[$B$-polytopes in dimension $2$] \label{d-2}
Here is the classification of two-dimensional $B$-polytopes (up to reordering of coordinates):

\begin{enumerate}
    \item \textbf{$\mathbf{B_1}$-polytope}. Point $(a,1)$ and at least two points on the ray $(\star,0)$.
        \begin{figure}[H]
        \centering
        \includegraphics[scale = 1]{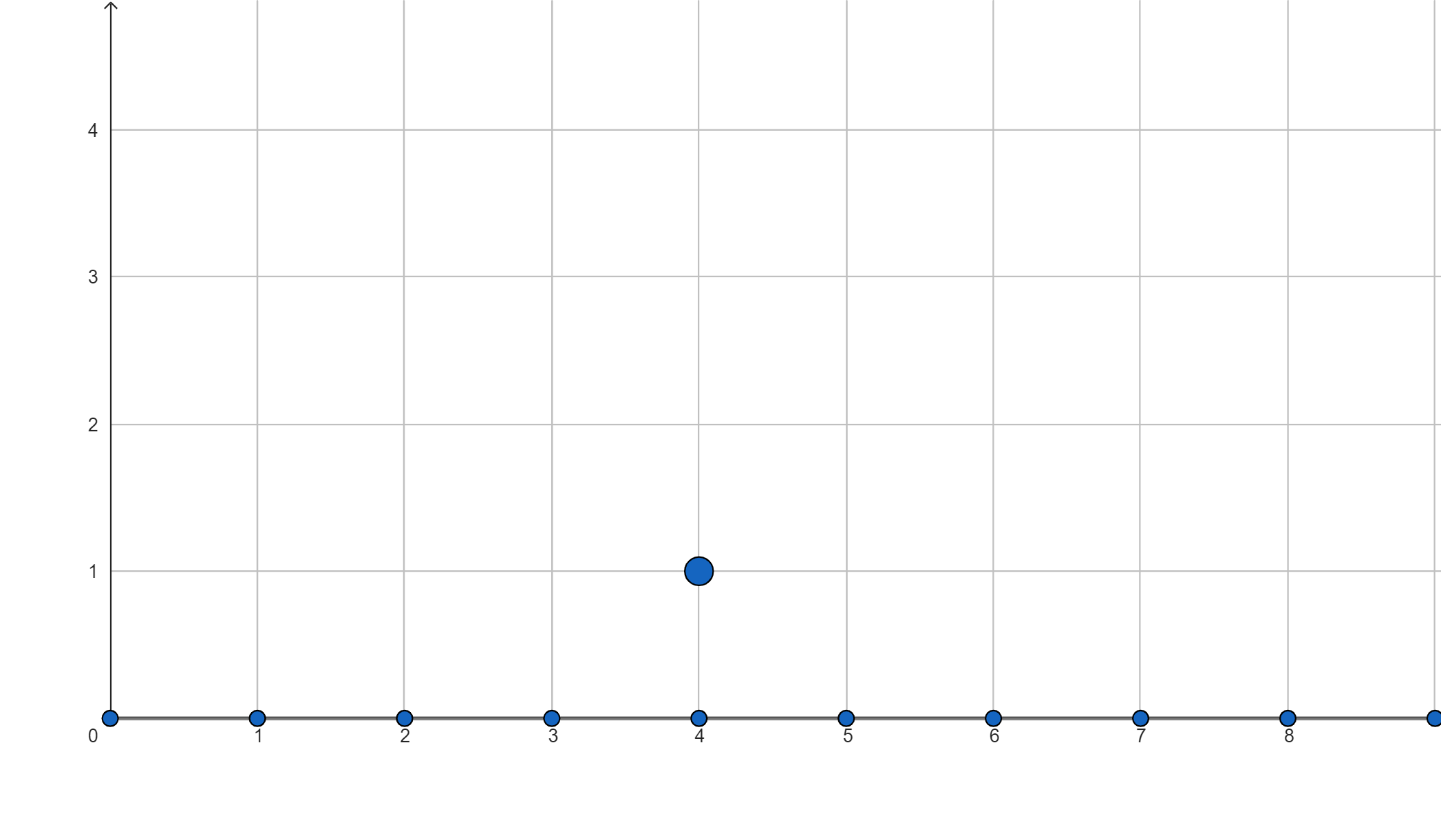}
        \caption{$B_1$-polytope}
        \label{B-p2-1}
    \end{figure}
    
    \item \textbf{Border polytope}. Points $(0,1),(1,1)$ and at least one point on the ray $(\ge 1,0)$ and maybe the origin.
        \begin{figure}[H]
        \centering
        \includegraphics[scale = 1]{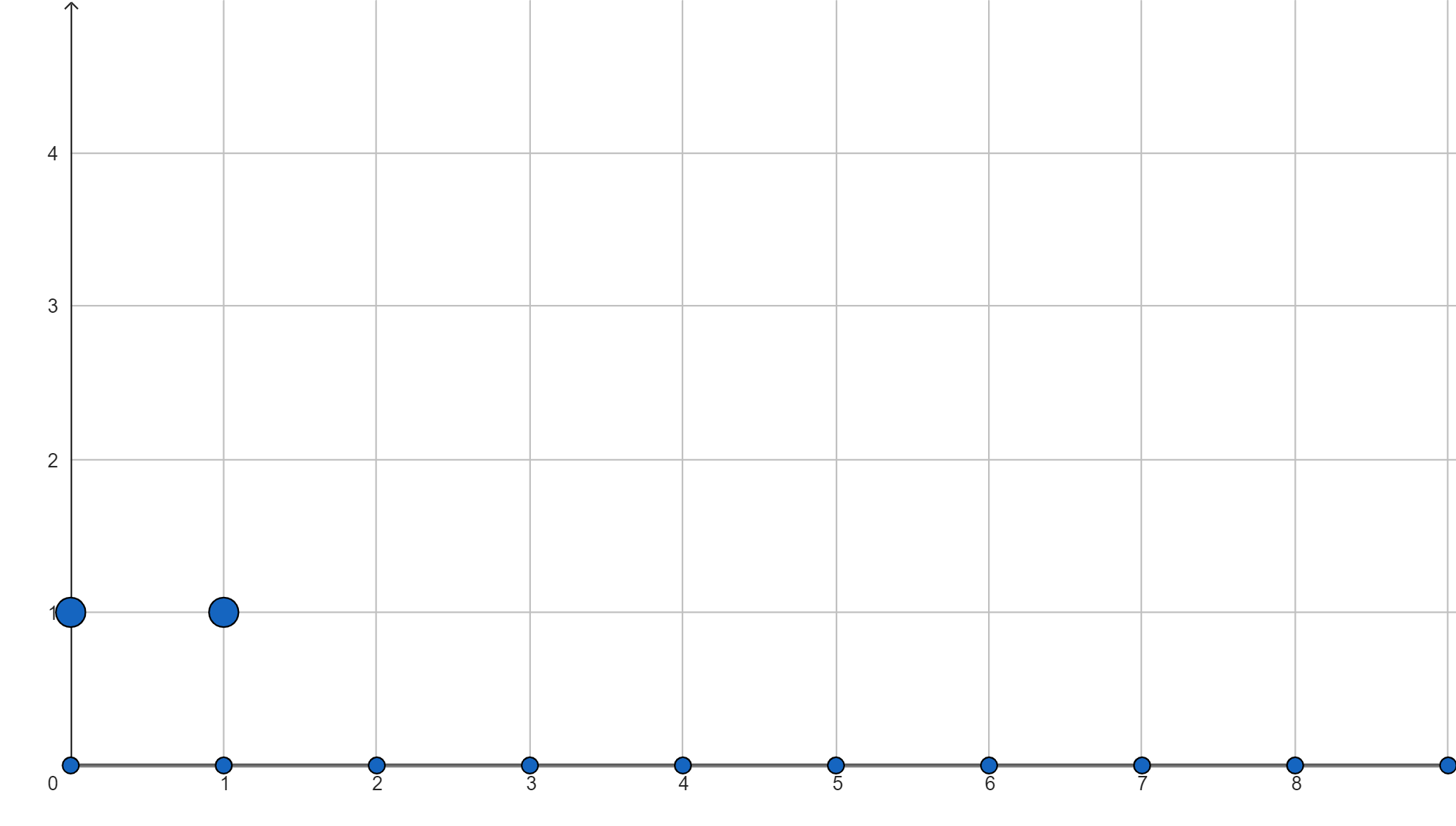}
        \caption{border polytope}
        \label{B-p2-2}
    \end{figure}
    
\end{enumerate}

\end{lemma}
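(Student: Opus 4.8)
The plan is to work directly from Definition \ref{B_pol_def}: a 2-dimensional set $S \subset \Z^2_{\ge 0}$ is a $B$-polytope iff every triangle with vertices in $S$ is either a $B$-simplex (a height-1 pyramid over a coordinate axis, i.e. two points on one axis plus a point at lattice distance 1 from that axis line) or has affine span through the origin. First I would reduce to the case $0 \in S$ or $0 \notin S$ separately, but the cleaner route is to argue about the lines parallel to the axes that meet $S$. Since $S$ is 2-dimensional, it is not contained in any single line; in particular it meets at least two distinct horizontal lines or at least two distinct vertical lines.

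The key step is a \emph{pinching} argument on one coordinate, say the second. Let $h_0 < h_1 < \dots$ be the distinct values of the second coordinate occurring in $S$, and let $S_i$ be the points of $S$ with second coordinate $h_i$. I claim at most two such levels are nonempty, and more precisely the configuration collapses quickly: pick three points, two on the same level $S_i$ (possible if some $|S_i| \ge 2$) and one on another level $S_j$; the triangle they span must be a $B$-simplex or pass through $0$. A $B$-simplex over the first axis forces $h_i = 0$ and $h_j = 1$ (or vice versa); a $B$-simplex over the second axis forces the two chosen points on level $S_i$ to be adjacent on a vertical line, which is impossible since they share a second coordinate — so that option is out. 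The "affine span through $0$" alternative is a strong linear constraint relating the three points. Chasing these alternatives, I expect to show: either $S$ has a level of size $\ge 2$ sitting on the axis $\{x_2 = 0\}$ and everything else is squeezed onto the line $\{x_2 = 1\}$, which (after checking that the $\{x_2=1\}$ part is a single point, else one gets a non-$B$ triangle, unless that point is forced to be $(1,1)$ paired with $(0,1)$) lands in case (1) or case (2); or $S$ lies in the union of a line through $0$ and a coordinate axis, again forcing one of the two listed shapes after re-examining triangles that use the origin.

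The main obstacle — and where the casework is genuinely needed — is ruling out hybrid configurations where the "$B$-simplex" escape route and the "span contains $0$" escape route are used by different triangles of the same set $S$: a priori $S$ could have, say, two points on $\{x_2=0\}$, a point $(1,1)$, and a further point $p$ off both the axis and the line through the origin, with every triangle individually satisfying the definition via a different clause. I would close this by noting that once $(0,1)$ and $(1,1)$ are both present (the essential content of case (2)), any additional point $p \notin \{0\} \cup \{x_2=0\}$ together with $(0,1),(1,1)$ spans a triangle that is neither a $B$-simplex (its base on $\{x_2=1\}$ has length $\ge 1$ but the apex $p$ is at distance $\ge 1$, and a check of the small cases shows it fails the height-1 condition) nor passes through the origin, a contradiction; and symmetrically in case (1), a second point of the form $(a',1)$ with $a' \ne a$, or an apex off the axis, produces a forbidden triangle. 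Finally I would verify the converse — that the two listed families genuinely are $B$-polytopes — by a direct inspection of the three types of triangles they admit (two base points on $\{x_2=0\}$ plus the apex, which is a $B$-simplex; and, in the border case, any triangle using the origin, whose affine span contains $0$ by construction).
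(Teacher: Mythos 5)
There is a fundamental misreading of Definition~\ref{B_pol_def} in your proposal. For an $n$-dimensional $B$-polytope, the condition concerns \emph{$(n-1)$-dimensional} simplices, so for $n=2$ the relevant objects are \emph{segments} (1-simplices), not triangles. You analyze triangles throughout, and the clause ``affine span contains the origin'' then becomes vacuous: the affine span of a non-degenerate 2-simplex in $\R^2$ is all of $\R^2$, which always contains the origin. Under your reading, every 2-dimensional subset of $\Z^2_{\ge 0}$ would be a $B$-polytope, which contradicts the lemma. The fact that your sketch nonetheless ``lands in cases (1) or (2)'' suggests you were implicitly treating the origin clause as a genuine constraint on three points, which it is not in your setup; the contradiction should have surfaced there.

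The paper's proof works at the correct dimension: it checks, for every pair of points $G,G'$ in the set, that the segment $GG'$ is either a 1-dimensional $B$-simplex (one endpoint on a coordinate axis, the other with the corresponding coordinate equal to $1$) or its affine line passes through the origin. From this it deduces in three short steps that there is no point $(\ge 2,\ge 2)$, no point $(\ge 2,1)$ (or symmetrically), and no pair $(\ge 2,\star),(\star,\ge 2)$, which pins down the two listed configurations. To repair your argument you would need to restart from segments; the ``pinching by levels'' idea can still be used, but the constraint to chase is on pairs of points, and the non-$B$-segment escape route is the genuinely restrictive ``the line $GG'$ passes through $0$,'' not an always-true statement about triangles.
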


\begin{proof}

If there is a coordinate ray such that there is only one point outside it then we obtain the case $1$. Otherwise we obtain the second case and the proof consists of the following observations.

\begin{enumerate}

    \item There is no point  $G \ (\ge 2, \ge 2)$.

    If there is such point $G \ (\ge 2,\ge 2)$ then there is another point $G^\prime$ such that affine span of $GG^\prime$ does not contain the origin since $B$-polytope is 2-dimensional. Then $GG^\prime$ is not $B$-segment and its affine span does not contain the origin.
    
    \item There is no point $G \ (\ge 2, 1)$ (and vice versa).

    If there is such point $G\ (\ge 2, 1)$, then consider another point $G^\prime$ of the form $(\star,\ge 1)$, it exists since there are two points outside $\{x_2 = 0\}$. The segment $GG^\prime$ is not $B$-segment and its affine span does not contain the origin, since $G^\prime$ is not $(\ge 2, \ge 2)$.
    
    \item There is no pair of points $(\ge 2, \star), (\star,\ge 2)$.

    Segment between these points is not $B$-segment. And its affine span does not contain the origin since there are no points $(\ge 2, \ge 2)$.

\end{enumerate}

The first two observations provide that the only point outside coordinate cross is $(1,1)$, the last that on one of the axis the only allowed point is $1$. So we obtained the desired classification.
    
\end{proof}

\section{Marked $\mathbf{B}$-polytopes and proof of Lemma \ref{point}}\label{M-B-pol}

\textbf{Marked polytope} is a set $\alpha \in \Z^n$ of dimension $n$ with some \textbf{marked} facets.

\begin{definition}
    We call marked polytope $\alpha\in\Z^n$ ($dim(\alpha )= n$) a \textbf{marked }$\mathbf{B}$\textbf{-polytope} if every $n$-simplex with vertices in $\alpha$ is a \textbf{marked} $\mathbf{B}$\textbf{-simplex} in $\alpha$ i.e. there is a marked facet of $\alpha$ such that the $n$-simplex is a pyramid of lattice height 1 with base on the marked facet.
\end{definition}

Note that in contrast to Definition \ref{B_pol_def} here we use $n$-dimensional B-simplices (instead of $(n-1)$-dimensional B-simplices).

\begin{claim}
There are only two marked $B$-polytopes in dimension $1$ up to lattice isomorphism. They are segments of length $1$ with one or two marked vertices. 
\end{claim}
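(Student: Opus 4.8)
The plan is to simply unwind the definitions, which in dimension one reduce everything to elementary arithmetic on $\Z$. A one-dimensional marked polytope $\alpha$ is a finite subset of $\Z$ of cardinality at least two; its facets are its two extreme points $\min\alpha$ and $\max\alpha$, and some subset of these two points is marked. An $n$-simplex with vertices in $\alpha$ (here $n=1$) is an unordered pair $\{a,b\}$ of distinct points of $\alpha$. It is a marked $B$-simplex exactly when one of $a,b$ is a marked facet of $\alpha$ and $|a-b|=1$: indeed a pyramid of lattice height $1$ with base a single point $v$ is a segment $\{v,w\}$ whose apex $w$ lies at lattice distance $|w-v|$ from the affine span $\{v\}$ of the base, so the height condition forces $|w-v|=1$. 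Hence $\alpha$ is a marked $B$-polytope if and only if every pair of its points is at distance $1$ and contains a marked facet.

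First I would deduce that $|\alpha|=2$. If $\alpha$ had at least three points, then $\max\alpha-\min\alpha\ge 2$, so the $1$-simplex $\{\min\alpha,\max\alpha\}$ has lattice height at least $2$ and cannot be a marked $B$-simplex, a contradiction. Thus $\alpha=\{a,a+1\}$ is a segment of lattice length $1$, and both of its points are facets. The only $1$-simplex with vertices in $\alpha$ is $\{a,a+1\}$ itself, and for it to be a marked $B$-simplex at least one of the two endpoints must be marked; so either exactly one endpoint is marked, or both are. Finally, applying the affine lattice automorphisms $x\mapsto \pm x+c$ of $\Z$, I would normalize $\alpha=\{0,1\}$; the automorphism $x\mapsto 1-x$ swaps the two endpoints, so the two configurations with exactly one marked endpoint are lattice isomorphic, while neither is isomorphic to the configuration with both endpoints marked, since a lattice isomorphism preserves $\alpha$ and hence the number of marked facets. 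This gives exactly the two marked $B$-polytopes in the statement.

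There is no real obstacle here. The only place that needs a moment of care is interpreting "pyramid of lattice height $1$ with base on the marked facet" when the facet is zero-dimensional; once one observes that in this case the base is a point $v$, its affine span is $\{v\}$, and the height of the apex is just its lattice distance to $v$, the whole classification is immediate.
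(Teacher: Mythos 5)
Your proof is correct, and it is essentially the same elementary unwinding of the definitions that the paper implicitly relies on (the paper states this claim without proof). The key observations — that in dimension $1$ a facet is an extreme point, that a marked $B$-simplex is a pair at lattice distance $1$ containing a marked endpoint, that three or more points force a pair at distance $\ge 2$, and that $x\mapsto 1-x$ identifies the two single-marking configurations — are exactly what the classification requires.
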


\begin{lemma}\label{section_lem}
    Consider a B-facet $\tau$ in $\Z^n_{\ge 0}$. Denote by $\D$ the $(n-1)$-dimensional simplex aff$(\tau) \cap \R^n_{\ge 0}$. Suppose that $\tau \cap E$ is a V-face but not a $B$-face in the coordinate subspace $E$. Consider an affine subspace $H \subset \text{aff}(\D)$ of dimension $n-1-dim(E)$, whose affine span with $E\cap \D$ is the whole $\D$. Suppose that $\tau_H = \tau \cap H$ is of the same dimension as $H$. We mark those facets of $\tau_H$ which are the intersections with coordinate hyperplanes $E_i$ containing $E$ such that $H$ contains lattice points with $i$-th coordinate equal to $1$.
    
    Then the set $\tau_H$ is a marked $B$-polytope. Moreover, if $\tau \setminus E = \tau_H$ then the conditions ``$\tau_H$ is a marked $B$-polytope'' and ``$\tau$ is a $B$-facet'' are equivalent.
\end{lemma}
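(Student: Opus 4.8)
The plan is to reduce the statement about $\tau_H$ to a statement about $\tau$ by "reconstructing" an $(n-1)$-simplex in $\tau$ from an $n$-simplex in $\tau_H$ and vice versa, using the extra direction(s) supplied by $E \cap \D$. First I would set up coordinates so that $E = \{x_{k+1} = \dots = x_n = 0\}$ for $k = \dim E$, so that $E \cap \D$ is the face of $\D$ spanned by $e_1,\dots,e_k$ (suitably scaled). The hypothesis that $\operatorname{aff}(H)$ together with $E \cap \D$ spans all of $\operatorname{aff}(\D)$ means precisely that every point of $\operatorname{aff}(\D)$ is an affine combination of a point of $H$ and the vertices $e_i$, $i \le k$; since $\tau \cap E$ is a V-face but not a $B$-face, by Lemma \ref{proj_lem} (applied with the roles reversed, projecting along the complement of $E$) the trace of $\tau$ on $E$ already "sees" enough structure. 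Concretely, the key geometric fact I would isolate is: for a point $v \in \tau_H$ and a vertex $e_i$ of $E \cap \D$ with $i \le k$, the segment from $v$ to the unit point on the $i$-th axis lies in $\operatorname{aff}(\D)$, and whether that segment is a $B$-segment is governed exactly by whether the $i$-th coordinate of $v$ is $\le 1$ — which is what the marking records.

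The main step is then a dimension count matching up the two simplex conditions. Given an $n$-simplex $S \subset \tau_H$ with vertex set $v_0,\dots,v_n$, I want to produce from it an $(n-1)$-simplex in $\tau$ whose $B$-simplex status forces $S$ to be a marked $B$-simplex, and conversely; the bookkeeping is that $\tau_H$ lives in an affine space of dimension $n - 1 - \dim E$ shifted away from the coordinate subspace, so an $n$-simplex in $\tau_H$ has "too many" vertices unless we trade $\dim E + 1$ of them for the vertices $e_i$ of $E \cap \D$. More precisely: an $n$-simplex $S$ in $\tau_H$ spans $\operatorname{aff}(\D)$ shifted by $H$'s offset, i.e. $\operatorname{aff}(S) = \operatorname{aff}(\D)$; pick any $(\dim E - 1)$-face of the coordinate simplex $E \cap \D$, say on vertices $e_1,\dots,e_{\dim E}$; then $\operatorname{aff}(S)$ contains these, and replacing an appropriate facet of $S$ by them yields an $(n-1)$-simplex $S' \subset \operatorname{aff}(\D)$ with vertices in $\tau$ (the $e_i$ are in $\tau$ because $\tau \cap E$ being a non-$B$-face forces unit points on those axes to be unavailable — wait, this is exactly where the marking hypothesis "$H$ contains lattice points with $i$-th coordinate $1$" enters, guaranteeing the relevant point of $\tau$ does have $i$-th coordinate $1$). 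Then $S'$ is a $B$-simplex since $\tau$ is a $B$-facet, and unwinding the definition of $B$-simplex (base on a coordinate hyperplane, lattice height $1$) translates directly into: $S$ is a pyramid of lattice height $1$ over one of the marked facets of $\tau_H$.

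For the converse and the "moreover" clause: if $\tau \setminus E = \tau_H$, then every $(n-1)$-simplex in $\tau$ either lies in $E$ (and there it is automatically a $B$-simplex or contains the origin, but since it is a facet of a Newton polyhedron with positive covector it cannot contain the origin, handled separately) or uses at least one vertex outside $E$, i.e. in $\tau_H$, and then the same reconstruction run backwards shows it is a $B$-simplex iff the corresponding $n$-simplex in $\tau_H$ is a marked $B$-simplex. The hard part, and the step I expect to be the main obstacle, is the precise verification that the marking on $\tau_H$ captures exactly the $B$-simplex condition on the reconstructed simplices in $\tau$: one must check that an $n$-simplex of $\tau_H$ that is a pyramid of height $1$ over the facet $\tau_H \cap E_i$ corresponds under the reconstruction to an $(n-1)$-simplex of $\tau$ with base on the hyperplane $\{x_i = 0\}$ and lattice height $1$, and that the hypothesis on $H$ (containing lattice points with $i$-th coordinate $1$) is both necessary and sufficient for this correspondence to be well-defined — i.e. that the marked facets are neither too many nor too few. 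This requires carefully tracking how lattice heights transform under the affine projection $\operatorname{aff}(\D) \to \operatorname{aff}(\tau_H)$ along $E \cap \D$, and I would handle it by an explicit computation of the relevant determinant / index of sublattices.
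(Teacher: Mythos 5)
Your plan diverges from the paper's argument at the crucial step, and the divergence is a genuine error, not just a different route. You propose to build an $(n-1)$-simplex in $\tau$ by taking a full-dimensional simplex $S\subset\tau_H$ and adjoining (or swapping in) the unit vertices $e_1,\dots,e_{\dim E}$ of $E\cap\D$. But those unit points on the coordinate axes are generally \emph{not} in $\tau$: by Claim \ref{On_ray1_claim}, if a unit point on a coordinate ray were in $\tau$, then $\tau$ would already be a $B_1$-facet, and the paper explicitly restricts to the complementary case after that claim. So the $(n-1)$-simplices you construct have vertices outside the point set $\tau$, and the hypothesis ``$\tau$ is a $B$-facet'' tells you nothing about them. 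You seem to notice this mid-sentence (the parenthetical ``wait, this is exactly where the marking hypothesis \dots enters'') but never resolve it — the marking hypothesis concerns which $i$-th coordinates are realized by lattice points of $H$, not whether $e_i\in\tau$.

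The paper's proof makes a small but decisive move that you miss: instead of the coordinate-simplex vertices $e_i$, one picks a $(\dim E-1)$-simplex $\D_E$ inside $\tau\cap E$ that is \emph{not} a $B$-simplex — this is exactly what the hypothesis ``$\tau\cap E$ is a V-face but not a $B$-face'' guarantees to exist, and it is the whole reason that hypothesis appears. Then for any $\dim(H)$-simplex $\D_H\subset\tau_H$, the union $\D_E\cup\D_H$ has $\dim E + (\dim H + 1) = n$ vertices, genuinely lies in $\tau$, and spans $\text{aff}(\D)$; since $\tau$ is a $B$-facet it is a $B$-simplex, and because $\D_E$ was chosen non-$B$, the base coordinate hyperplane can only be one with $E\subset\{x_i=0\}$ and apex in $\D_H$ — which unwinds precisely to $\D_H$ being a marked $B$-simplex. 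The converse (used for the ``moreover'' clause) runs the same equivalence backward. Your ``explicit computation of the relevant determinant/index of sublattices'' at the end is a placeholder rather than an argument; the paper needs no such computation because the non-$B$ choice of $\D_E$ already rules out the wrong bases. In short: replace the unit points $e_i$ by a non-$B$-simplex in $\tau\cap E$, and the rest of your dimension count is essentially right.
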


Here are some illustrations of the notations from the latter lemma.

\begin{figure}[H]
    \centering
    \includegraphics[scale = 1]{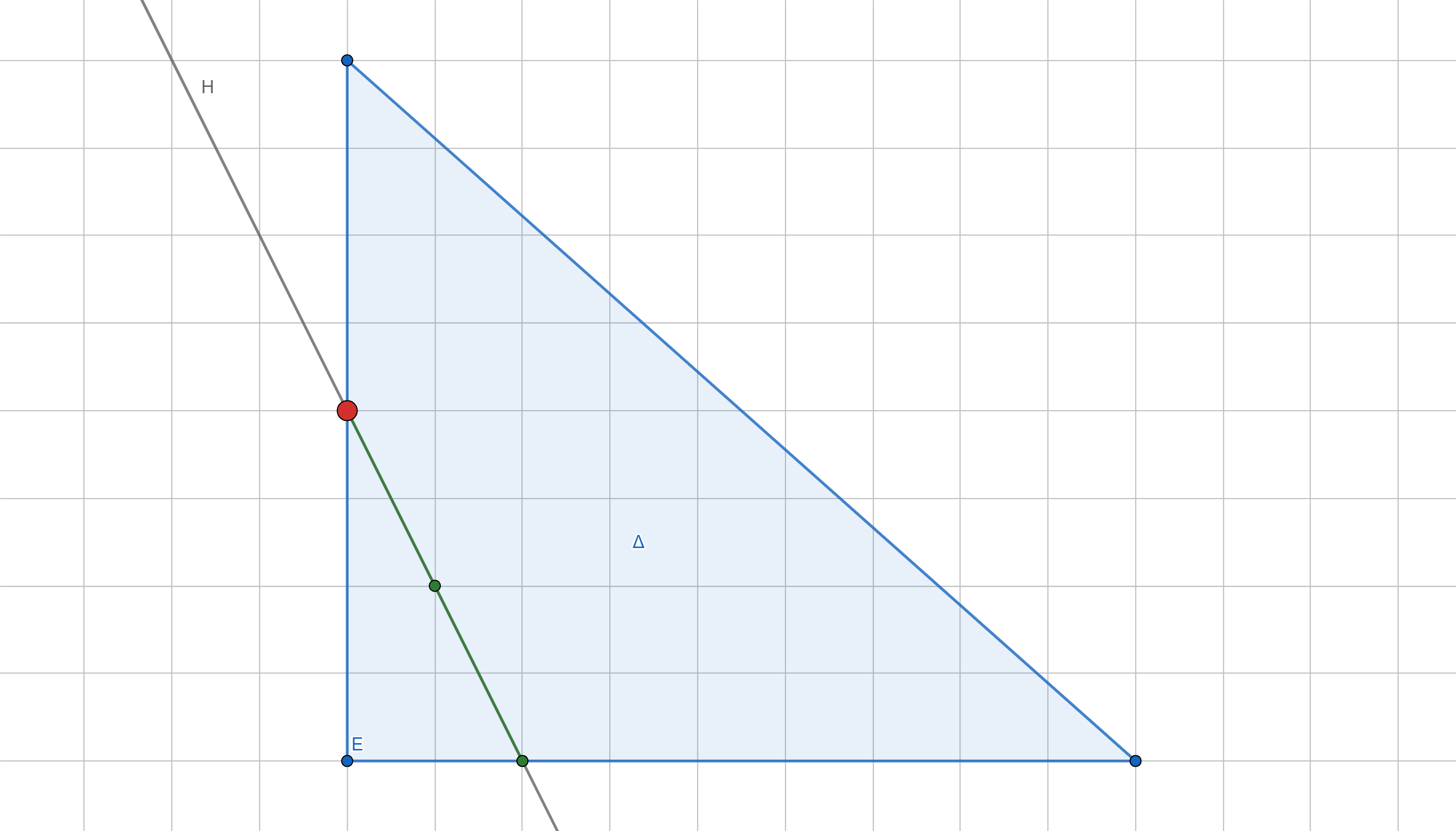}
    \caption{\label{marked2} $E$ is the ray containing vertex of right angle. There is only one marked vertex (the top one).}
\end{figure}

\begin{figure}[H]
    \centering
    \includegraphics[scale = .5]{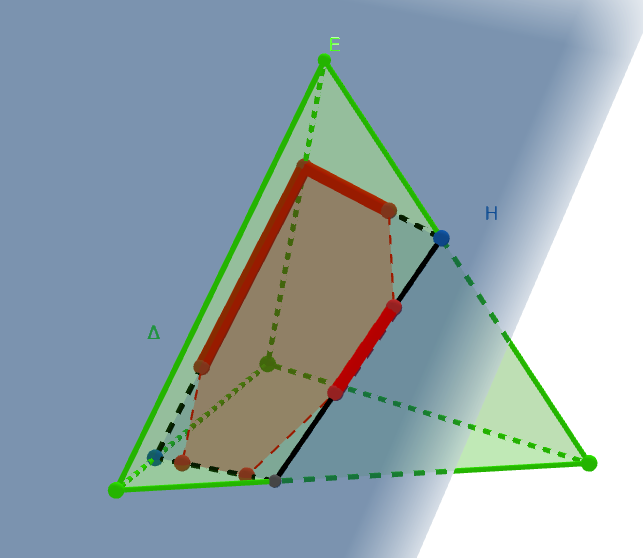}
    \caption{\label{marked} $E$ is the ray containing the top vertex of $\D$. There are at most $3$ marked sides.}
\end{figure}

\begin{proof}
    Consider a simplex $\D_E$ of dimension $dim(E) - 1$ in $E\cap \tau$ which is not a $B$-simplex. Consider a $dim(H)$-simplex $\D_H$ in $\tau_H$. Then their union $\D_E \cup \D_H$ is a $(n-1)$-simplex (because $dim(H) + dim(E) - 1 = n-1$ and aff$(H \cup (E\cap \D)) = \D$). The simplex $\D_E \cup \D_H$ is a $B$-simplex in $\Z^n_{\ge 0}$ if and only if $\D_H$ is a $B$-simplex in the marked polytope $\tau_H$. This equivalence implies Lemma \ref{section_lem}.
\end{proof}

Let us classify all marked $B$-polygons in dimension $4-2=2$. It is needed to prove Lemma \ref{point}.

\begin{lemma}\label{B_in_pol}
    Set $\alpha\subset \Z^2$ is a marked $B$-polygon if and only if (up to lattice transformation) one of the following cases is applied:
    \begin{enumerate}
        \item \textbf{$\mathbf{B_1}$-marked polygon}: $\alpha$ is pyramid with base on a marked side and height $1$.    \begin{figure}[H]
        \centering
        \includegraphics[scale = 2]{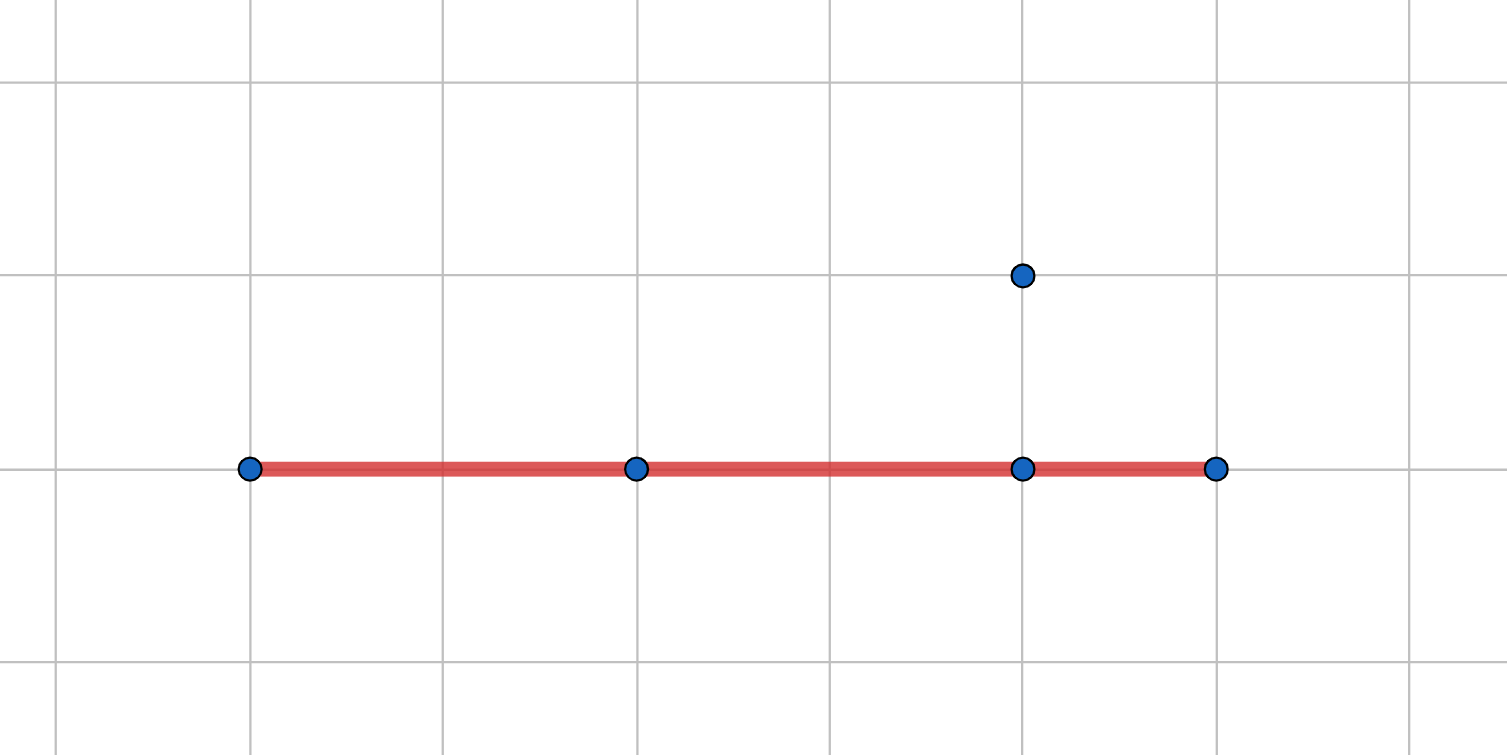}
        \caption{$B_1$}
        \label{B1-fig}
    \end{figure}
        \item \textbf{$\mathbf{B_2}$-marked polygon}: $\alpha$ is contained in the strip $x_1=\{0,1\}$ and both sides on the boundary of the strip are marked.
    \begin{figure}[H]
        \centering
        \includegraphics[scale = 2]{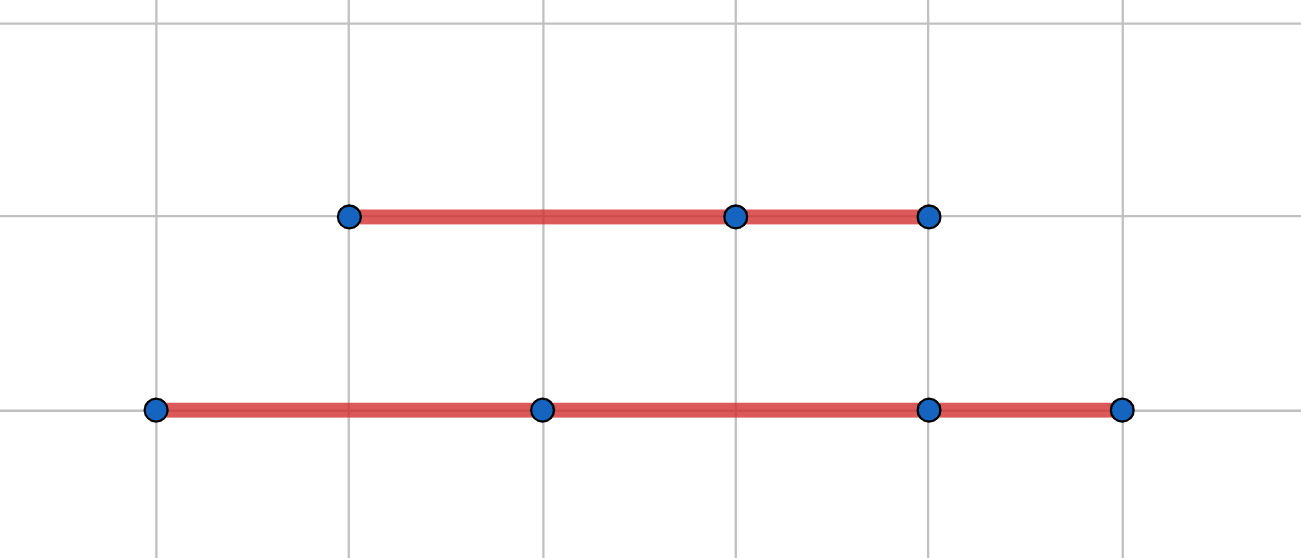}
        \caption{$B_2$}
        \label{B2-fig}
    \end{figure}
    
        \item \textbf{Flat border marked polygon}: $\alpha = \{(0,0),(\star>1,0), (0,1),(1,1)\}$ and all sides except $[(0,1),(1,1)]$ are marked.
        \begin{figure}[H]
        \centering
        \includegraphics[scale = 2]{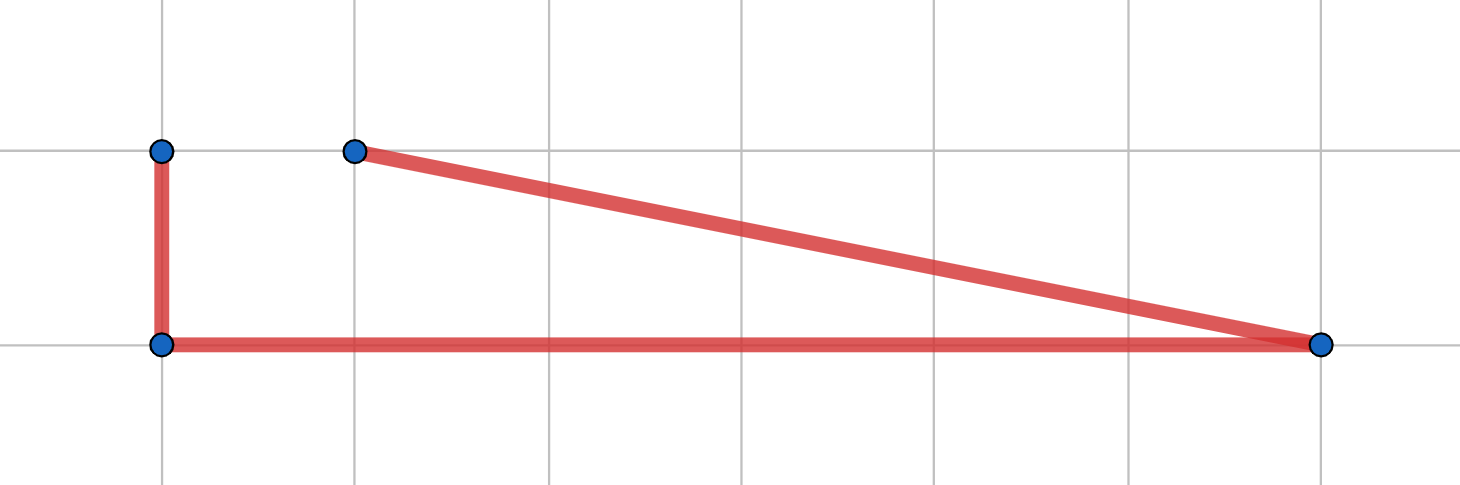}
        \caption{Flat border}
        \label{Flat-fig}
    \end{figure}
    \end{enumerate}
\end{lemma}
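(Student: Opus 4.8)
The statement has two directions, which I would treat separately.

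\emph{Each listed configuration is a marked $B$-polygon.} This is a direct check: for each of the three families one runs through the finitely many combinatorial types of triangle $T$ with vertices in $\alpha$ and names a marked side of $\alpha$ with respect to which $T$ is a pyramid of lattice height $1$. For a $B_1$-marked polygon every triangle has the marked base among its sides. For a $B_2$-marked polygon, among any three vertices two lie on the same boundary line of the strip $x_1\in\{0,1\}$ and the third is at lattice distance $1$ from it. For a flat border $\alpha=\{(0,0),(p,0),(0,1),(1,1)\}$ with $p>1$, three of the four triangles visibly use the marked bottom edge or the marked left edge; the one point to verify is the triangle $\{(p,0),(1,1),(0,1)\}$, which is a height-$1$ pyramid over the marked slanted edge $[(p,0),(1,1)]$ since that edge lies on the line $x_1+(p-1)x_2=p$, whose primitive normal $(1,p-1)$ gives lattice distance $|(p-1)-p|=1$ from $(0,1)$.

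\emph{Reduction to a strip of width $1$.} Now let $\alpha$ be a marked $B$-polygon with $\dim\alpha=2$. The crucial step is to prove that $\conv(\alpha)$ is a lattice triangle or a lattice trapezoid of lattice width $1$; equivalently, after a lattice transformation $\alpha\subset\{x_2\in\{0,1\}\}$. The starting point is that any three non-collinear points of $\alpha$ span a triangle which, being a marked $B$-simplex, is a pyramid of lattice height $1$ over the line joining two of those points, hence has lattice width $1$. The subtlety — and the reason the analogous assertion is false for arbitrary point sets (e.g. $\{(0,0),(2,0),(1,1),(1,-1)\}$) — is that the two ``base'' points of such a pyramid must lie on a genuine edge of $\conv(\alpha)$, never on a mere chord. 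I would exploit this by running around the vertices $V_1,\dots,V_k$ of $\conv(\alpha)$: for three consecutive vertices $V_{i-1},V_i,V_{i+1}$ the only pairs lying on a hull edge are $\{V_{i-1},V_i\}$ and $\{V_i,V_{i+1}\}$, so one of $[V_{i-1},V_i]$, $[V_i,V_{i+1}]$ is marked and carries the remaining vertex at lattice distance $1$; feeding all of these relations back into the marked $B$-condition, together with the analysis of a triangle whose apex is an extreme vertex of $\conv(\alpha)$, forces $k\le4$ and lattice width $1$. \textbf{I expect this convex-hull analysis to be the main obstacle}: it is essentially the place where the missing cases of the classification in \cite{ELT22} are accounted for.

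\emph{Finishing the classification.} Put $\alpha\subset\{x_2\in\{0,1\}\}$ and set $L_0=\alpha\cap\{x_2=0\}$, $L_1=\alpha\cap\{x_2=1\}$, both non-empty. If one of them, say $L_1$, is a single point, then $\alpha$ is a height-$1$ pyramid over $\conv(L_0)$; applying the marked $B$-condition to a triangle with both other vertices in $L_0$ (or, if $|L_0|=2$, to $\alpha$ itself) shows the relevant base edge is marked, so $\alpha$ is a $B_1$-marked polygon. If $|L_0|\ge3$ (and hence $|L_1|\ge2$), then triangles having a point of $L_0$ interior to the bottom edge among their vertices force both boundary lines of the strip to be marked, so $\alpha$ is a $B_2$-marked polygon; likewise if $|L_1|\ge3$. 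In the remaining case $|L_0|=|L_1|=2$, so $\alpha$ consists of exactly four points and $\conv(\alpha)$ is a lattice trapezoid; a shear taking one non-parallel side to $\{x_1=0\}$ writes $\alpha=\{(0,0),(p,0),(0,1),(q,1)\}$, and the four triangle conditions — several candidate lattice distances being automatically $1$, exactly as in the sufficiency computation — pin down $q=1$ together with the set of marked sides, yielding either the unit square (a $B_2$-marked polygon) or, when $p\ge2$ and the top side is the unique unmarked side, the flat border marked polygon. This last bookkeeping is routine once the width-$1$ reduction is available.
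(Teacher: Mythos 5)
The sufficiency direction of your proof (each listed configuration is a marked $B$-polygon) is correct, and your ``finishing the classification'' casework for a width-$1$ configuration is in the right spirit, though the claim that the shear always pins down $q=1$ needs care about which non-parallel side and which orientation one normalizes (if the top side happens to be the marked one you end up with the reflection of the flat-border configuration rather than $q=1$ on the nose; this is fixable bookkeeping). However, the heart of the lemma is exactly the step you flag as ``the main obstacle,'' and there you have a genuine gap: you observe that in a triangle $V_{i-1}V_iV_{i+1}$ of consecutive hull vertices the base must be one of the two hull edges, but you do not show how ``feeding all of these relations back'' forces $k\le 4$ and lattice width $1$. As written this is a hope, not an argument.

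The paper proves a stronger and more usable statement than ``lattice width $1$'': there is a \emph{marked} side $L$ such that every point of $\alpha$ is at lattice distance at most $1$ from $L$. The argument is a short iteration on lattice lengths: start with any $B$-triangle $AB\star$ over a marked base $AB$; if some $C$ has $\rho(C,AB)>1$, then $ABC$ must be a pyramid over another marked hull edge, say $BC$, and the area identity $l(AB)\,\rho(C,AB)=l(BC)\,\rho(A,BC)=l(BC)$ forces $l(BC)>l(AB)\ge 1$. Repeating once more with a point $D$ far from $BC$, the identity $l(BC)\,\rho(D,BC)=l(CD)\,\rho(B,CD)=l(BD)\,\rho(C,BD)$ together with $\rho(D,BC)\ge l(CD)$ (and symmetrically $\ge l(BD)$) gives $\rho(B,CD),\rho(C,BD)\ge l(BC)>1$, so $BCD$ cannot be a marked $B$-triangle — contradiction. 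This is the lemma's crux, and it does not appear in your proposal. A further consequence of proving it this way is that the width-$1$ strip already comes with one marked boundary side $L$, which streamlines the remaining case analysis (the paper then normalizes to $L=\{(0,0),(a,0)\}$ and reads off $B_1$, $B_2$, or flat border). You would need either to carry out your convex-hull-vertex scan to an actual conclusion or to adopt an argument of this area-iteration type to close the gap.
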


\begin{proof}
    Suppose $\alpha$ is not a $B_1$-marked polygon. Let us prove that there is a marked side of $P$ such that the distance from any point  of $\alpha$ to it is at most $1$. Assume the converse.
    Consider a $B$-triangle $AB\star \subset \alpha$ with marked base $AB$. Then there is a point $C\in \alpha$ such that $\rho(C,AB)>1$, where $\rho$ is the distance. Then $ABC$ must have another marked base, say, $BC$. Since $\rho(A,BC) = 1$ and $\rho(C,AB)>1$ the we have the following inequality for the lattice length $l(BC) > 1$. By our assumption, there is a point $D: \rho(D,BC)>1$ (see picture below).

    \begin{figure}[H]
        \centering
        \includegraphics[scale = 3]{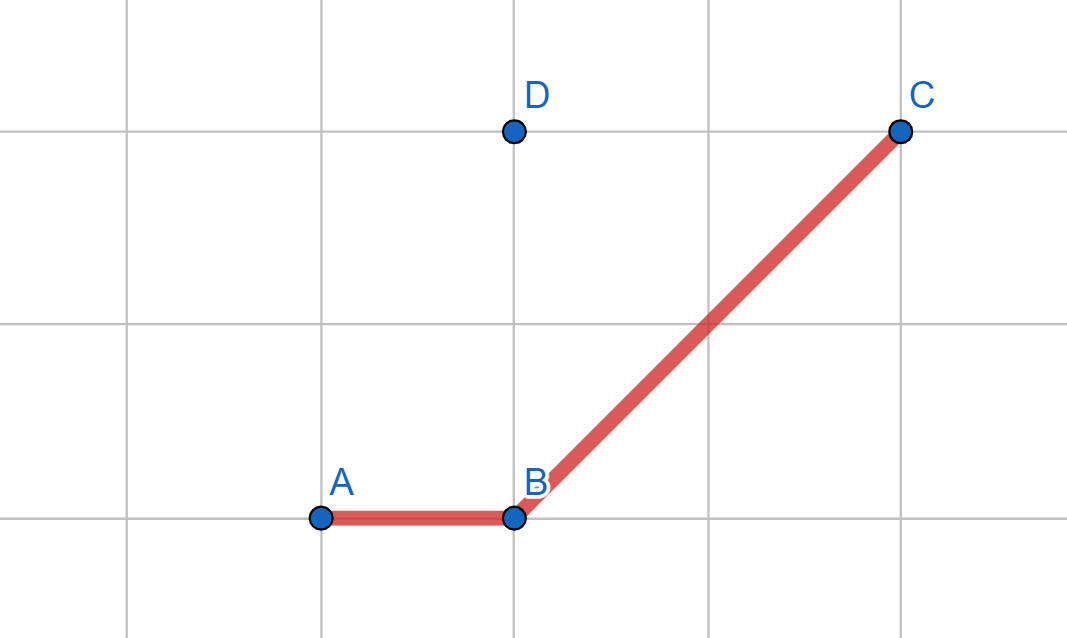}
        \caption{ABCD}
        \label{ABCD}
    \end{figure}

    Then the triangle's $BCD$ base is either $BD$ or $CD$. But both $\rho (B,CD), \rho(C,BD)$ are at least $l(BC)>1$. So $BCD$ is not $B$-triangle. Thus there is a marked side $L\subset P$ such that for any $Q\in \alpha$ the distance $\rho(Q,L) \le 1$. 
    
    If there are at least three points of $\alpha$ in $L$ or at least three points outside $L$ then points outside $L$ are in the same marked side of $P$ (the triangles on the picture below prove this) and $\alpha$ is a $B_2$-marked polygon case.

    \begin{figure}[H]
        \centering
        \includegraphics[scale = 3]{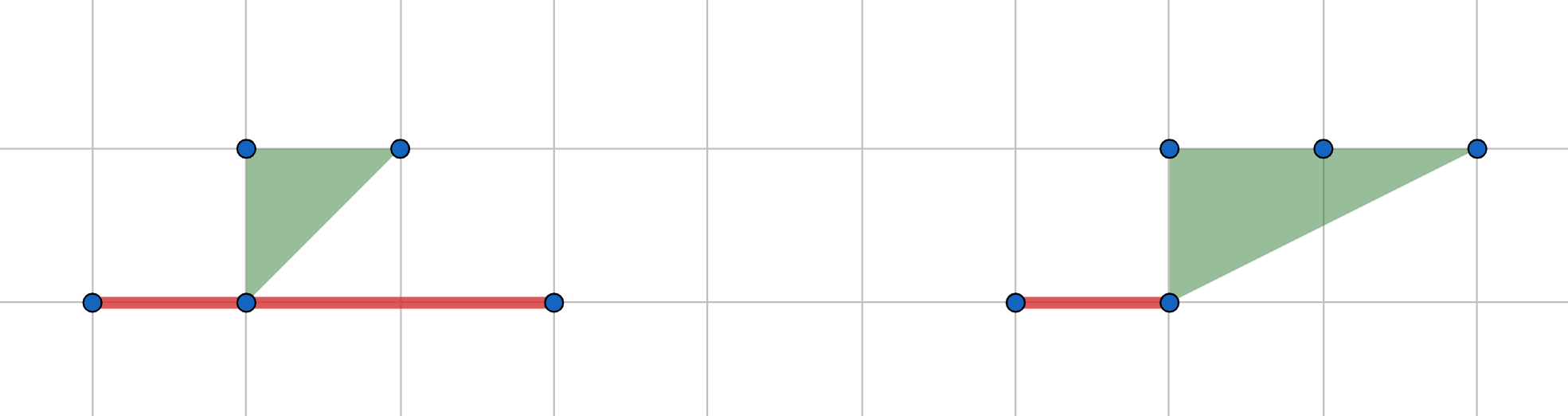}
        \caption{Triangles proving that top face is marked}
        \label{trianglesB2}
    \end{figure}

    The only case left is when $\alpha$ consists of $4$ points: two in $L$ and two outside are at height $1$ and do not form a marked side. After a lattice transformation we can assume that: $$\alpha = \{A,B,C,D\} = \{(0,0),(a,0), (0,1), (1,d)\}, \ a,d > 0 \quad L = \{(0,0),(a,0)\}.$$ Then bases of triangles $ACD$ and $BCD$ are marked sides $AC$ and $BD$ respectively. So $d = 1$ and there are two possibilities: $a = 1$ and $a > 1$. Case $a=1$ is the $B_2$-case (strip $x_2 = \{0,1\}$) and the case $a>1$ is the flat border case.
\end{proof}

Now we prove that Lemma \ref{B_in_pol} implies Lemma \ref{point}. Suppose that $B$-facet $\tau$ is a pyramid with apex $Q$ on a coordinate ray. Denote by $\tau_H$ the set defined in Lemma \ref{section_lem}. By Lemma \ref{section_lem} the polygon $\tau_H$ is a marked $B$-polygon, and these polygons are classified in Lemma \ref{B_in_pol}. In the cases of $B_1$- and $B_2$-marked polygons we obtain that $\tau$ is $B_1$- and $B_2$-faced respectively.

In the last case we have that $\tau_H$ is affine equivalent to a set

$$\alpha = \{A_H,B_H,C_H,D_H\} = \{(0,0),(a,0), (0,1), (1,1)\}, \ a > 0 \quad L = \{(0,0),(a,0)\}.$$

Then the coordinates of $\{A,B,C,D\}$ of these points in the initial space $\Z^4$ are of the form:

$$\{Q,A,B,C,D\} = \{(0,0,0,\star),(0,0,a,\star),(a,0,0,\star),(0,1,1,\star), (1,1,0,\star)\}.$$

Thus, we obtain a flat $B$-border pyramid from Example \ref{exotic_B_ex}.

\section{Proof of Lemmas \ref{triangle} and \ref{segment}}

\subsection{Proof of Lemma \ref{triangle}: internal triangle}

Suppose there is an internal V-triangle $A_1A_2A_3 \subset Ox_1x_2x_3$ in $\t$. 

\begin{enumerate}
    \item{Let us prove that $\tau \subset \{x_4 \le 1\}$}:\\
    Every $B$-tetrahedron with vertices $A_1A_2A_3$ has base $Ox_1x_2x_3$ since $A_1A_2A_3$ is internal and no $2$ of its points are contained in a coordinate 2-plane. So for all points in $\tau$ either $x_4 = 0$ or $x_4 = 1$.

    Assume that $\t$ is not a $B_1$-facet, i.e. it contains a segment $C_1C_2$ with $x_4 = 1$.

   \item   Let us prove that up to reordering of $A_i$ we have:
    $$A_1 = (> 0,> 0, 0, 0) \quad A_{2,3} = (\star,\star, 1, 0) \quad C_{1,2} = (\star,\star, 0, 1)$$
    
    \begin{figure}[H]
        \centering
        \includegraphics[scale = 0.3]{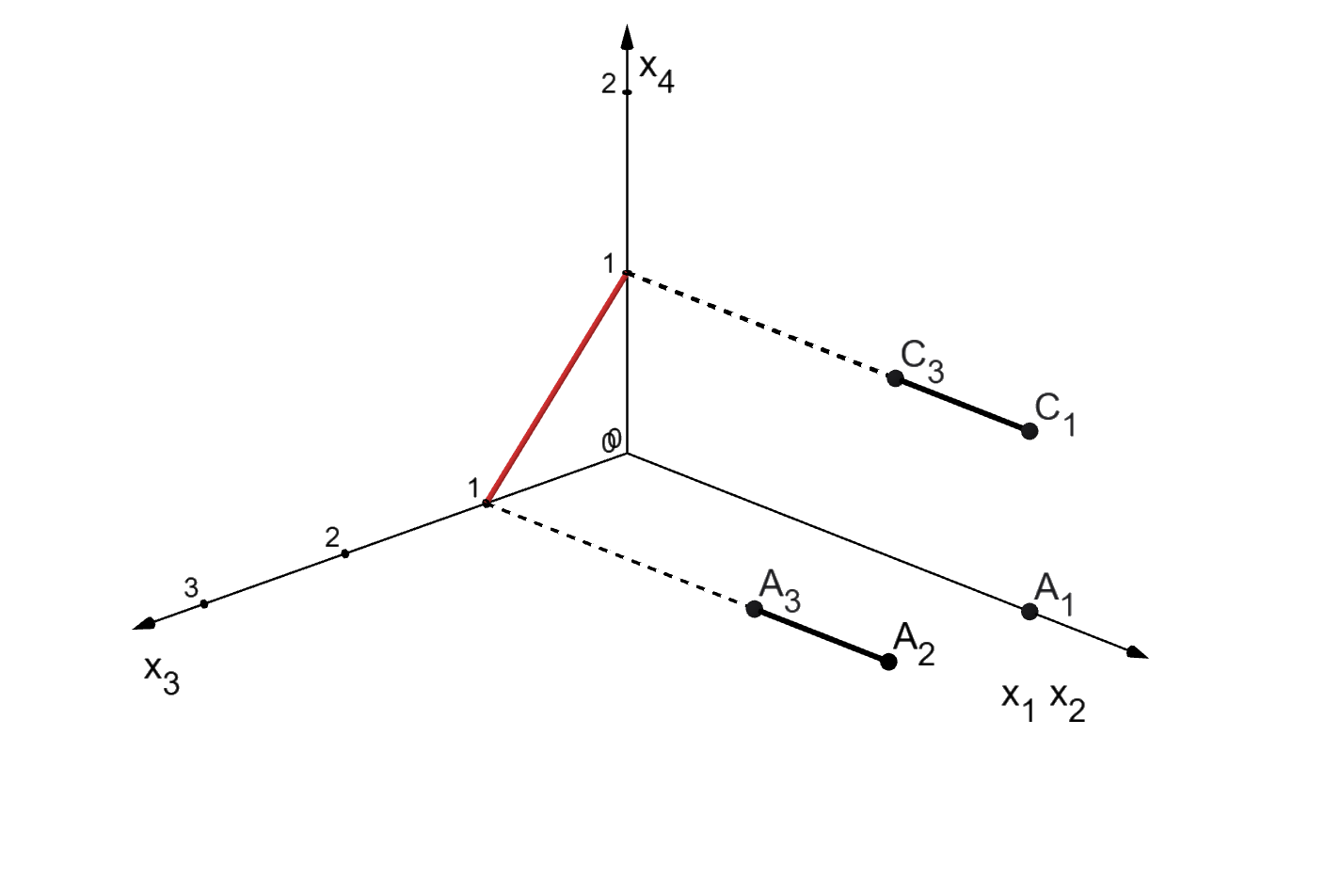}
        \caption{Internal triangle configuration}
        \label{triangle_config}
    \end{figure}
    
    Note that if $C_1C_2 A_iA_j$ is a tetrahedron then its apex is not $C_\alpha$ since $A_iA_j$ is not in a coordinate 2-plane. Note that least two of $C_1C_2 A_iA_j$ are tetrahedrons since the triangle $A_1A_2A_3$ can have at most one side parallel to $C_1C_2$. Without loss of generality we assume that $C_1C_2 A_1A_2$ is a tetrahedron with apex $A_2$ then $A_1 = (\star,\star, 0, 0), A_2 = (\star,\star, 1, 0), C_{1,2} = (\star,\star, 0, 1)$ and all the other coordinates of $A_1$ are positive since triangle $A_1A_2A_3$ is internal.
    
    Now we prove that $A_3 = (\star,\star, 1, 0)$. First note that $A_3 \ne (\star,\star, 0, 0)$, otherwise the side $A_1A_3$ is a $V$-segment. If $A_3 = (\star,\star, k, 0), k>1$ then $A_1 A_3C_1C_2$ is a tetrahedron, but not a $B$-tetrahedron ($x_{i} = 0,\ i = 1,2$ can't be a base because $A_1$ and some of $C_j$ are not contained there since $\t$ has positive normal covector). Thus, $A_3 = (\star,\star, 1, 0)$.
    
    \item Let us prove that $\t \subset \{x_3 \le 1\} \cap \{x_4 \le 1\}$:

    Suppose that there is a point $G$ such that $x_3(G) > 1$ then $A_1C_1C_2G$ is a tetrahedron but not $B$-tetrahedron (for the coordinate hyperplanes $x_{i} = 0,\ i = 1,2$ we use the same argument as in the previous item). If $x_4(G) > 1$ we consider $A_1A_2A_3G$ in the same way. 

    \item If there are no $G$ of the form $(\star,\star,1,1)$ then $\t$ is a (maybe degenerated) $B_2$-facet.
    
    \item If there is $G = (\star,\star,1,1) \in \t$ then $\t$ is the standard cross-polytope.

    Note that in one of the pairs $A_1GA_2C_1, A_1GA_3C_2$ and $A_1GA_2C_2, A_1GA_3C_1$ both polytopes are tetrahedrons (i.e. are not contained in a 2-dimensional affine subspace). These tetrahedrons are $B$-tetrahedrons, so $x_{1,2}$-coordinates of all points are also $0$ or $1$. It can be only if the segments $C_1C_2$ and $A_2A_3$ are shifted segment $[(1,0,0,0), (0,1,0,0)]$ since $\t$ has positive normal covector. Thus $A_1A_2A_3C_1C_2$ are $5$ of points of the standard cross-polytope and $G = (\star,\star ,1,1)$. So $G = (0,0,1,1)$ because it is the only point of this form contained in the affine hyperplane spanned by the others.

\end{enumerate}

\subsection{Proof of Lemma \ref{segment}: V-segment but not B-segment}

Suppose that $\tau$ contains a segment $A_1 A_2 \in Ox_1x_2$ which is not a $B$-segment. Then by Lemma \ref{proj_lem} the set $p_{Ox_1x_2} (\tau)$ is a $B$-polytope. We have two following cases by Lemma \ref{d-2}.

\begin{enumerate}

    \item \textbf{Border polytope}  

    Let us prove that in this case we obtain a flat border i.e. there is a point $C = (\star,\star,1,1)$ and all the other points are contained in $\{x_3= 0\} \cup \{x_4 = 0\}$. Moreover, we prove that (up to reordering $x_3$ and $x_4$), we have $x_4 \le 1$ for all the points.

    \begin{figure}[H]
        \centering
        \includegraphics[scale = 2]{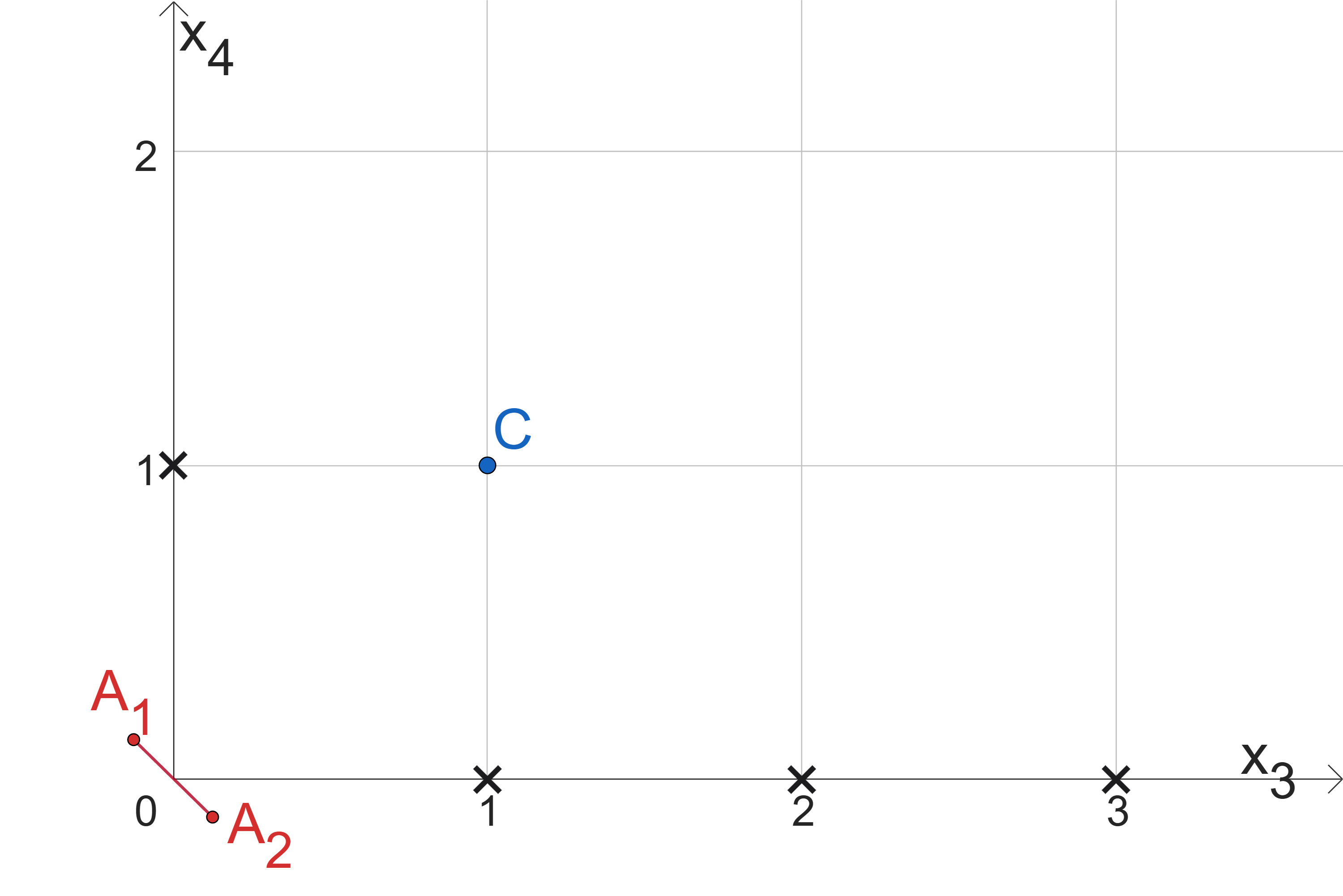}
        \caption{Projection of points}
        \label{Projection of points}
    \end{figure}
    
    The only thing we need to prove is that there is only one point $C$ of the form $(\star,\star,1,1)$. Suppose there two such points $C$ and $C^\prime$.
      
    Consider a point $D\in \tau$ of the form $(\star,\star,1,0)$. Then both $A_1CC^\prime D$ and $A_2CC^\prime D$ are tetrahedrons. They are $B$-tetrahedrons by the definition of $B$-facet. They cannot have base at $x_{3,4} = 0$ since both $C$ and $C^\prime$ have positive coordinates. They cannot have base at $x_{1,2} = 0$ for each $i=1,2$ one of $A_{1,2}$ and one of $C,C^\prime$ have positive coordinate (since $\t$ has positive normal covector). Thus, there is only one point $C$ of the form $(\star,\star,1,1)$.

    \item \textbf{$\mathbf{B_1}$-polytope}
    
    All the points are in $\{x_3 = 0\}$ except for some of the form $(\star,\star, 1, a)$ for some fixed $a$. Suppose there is only one point of the form $(\star,\star,a,1)$. Then $\tau$ is a $B_1$-facet. Assume the converse. Suppose there are at least two points $(C, C^\prime)$ of the form $(\star,\star,a,1)$

    \begin{enumerate}
        
        \item  Suppose that $a > 0$. Consider a point $D\in \tau$ of the form $(\star,\star, \ge 1, 0)$. Then both $A_1CC^\prime D$ and $A_2CC^\prime D$ are tetrahedrons. The only possibility for them to be $B$-tetrahedrons (up to reordering of first two coordinates) is 
        $$A_1 = (u,0,0,0), A_2 = (0,u,0,0), C_1 = (1,0,a,1),  C_2 = (0,1,a,1), D = (0,0,d,0),$$
        since $\tau$ has positive normal covector. Note that $u>1$ (again since $\tau$ has positive normal covector). Then $\tau$ contains the V-segment $A_2D \subset Ox_1x_4$ which is not $B$-segment. Then we obtain the first case and $\tau$ is flat border with triangle $A_2DC_1$.

        \item Suppose that $a=0$, and there is a point $D\in \tau$ of the form $(\star,\star, \ge 2, 0)$. Then the same argument from the previous item works.

        \item In the last case $a = 0$ and $\tau \subset \{x_3 \le 1\}$, the facet $\tau$ is a $B_2$-facet by the definition.
        
    \end{enumerate}    
    
\end{enumerate}

\vspace{1ex}
\noindent
\textit{Krichever Center for Advanced Studies}, Skolkovo Institute of Science and Technology, Moscow\\
\textit{Department of Mathematics}, National Research University ``Higher School of Economics'', Moscow \\
\textit{Email}: Fedor.Selyanin@skoltech.ru

\end{document}